\renewcommand{\a }{\alpha }
\renewcommand{\b }{\beta }
\renewcommand{\d}{\delta }
\newcommand{\D }{\Delta }
\newcommand{\e }{\varepsilon }
\newcommand{\g }{\gamma}
\renewcommand{\l }{\lambda }
\newcommand{\n }{\nabla }
\newcommand{\var }{\varphi }
\newcommand{\rh }{\rho }
\newcommand{\s }{\sigma }
\renewcommand{\t }{\tau }
\renewcommand{\th }{\theta }
\renewcommand{\O }{\Omega }
\newcommand{\ov}{\overline}
\def\p{\partial}
\newcommand{\wtilde }{\widetilde}
\newcommand{\be}{\begin{equation}}
\newcommand{\ee}{\end{equation}}
\newtheorem{remark}{Remark}[section]
\newcommand{\R}{\mathbb{R}}
\newcommand{\N}{\mathbb{N}}
\newcommand{\dis}{\displaystyle}
\newcommand{\dkr}{{\bf d}}
\newtheorem{theorem}{Theorem}[section]
\newtheorem{proposition}[theorem]{Proposition}
\newtheorem{example}[theorem]{Example}
\newcommand{\bpr}{\begin{proposition}}
\newcommand{\epr}{\end{proposition}}
\newcommand{\bex}{\begin{example}\rm}
\newcommand{\eex}{\end{example}}
\begin{document}

\newtheorem{lem}{Lemma}[section]
\newtheorem{pro}[lem]{Proposition}
\newtheorem{thm}[lem]{Theorem}
\newtheorem{rem}[lem]{Remark}
\newtheorem{cor}[lem]{Corollary}
\newtheorem{df}[lem]{Definition}

\title[Mean field equation involving probability measures]{A mean field equation involving positively supported probability measures: blow-up phenomena and variational aspects}

\author{Aleks Jevnikar, Wen Yang}

\address{ Aleks Jevnikar,~University of Rome `Tor Vergata', Via della Ricerca Scientifica 1, 00133 Roma, Italy}
\email{jevnikar@mat.uniroma2.it}

\address{ Wen ~Yang,~Center for Advanced Study in Theoretical Sciences (CASTS), National Taiwan University, Taipei 10617, Taiwan}
\email{math.yangwen@gmail.com}


\keywords{Geometric PDEs, Mean field equation, Blow-up analysis, Variational methods}

\subjclass[2000]{ 35J61, 35J20, 35R01, 35B44.}

\begin{abstract}
We are concerned with an elliptic problem which describes a mean field equation of the equilibrium turbulence of vortices with variable intensities.

In the first part of the paper we describe the blow-up phenomenon and highlight the differences from the standard mean field equation.

In the second part we discuss the Moser-Trudinger inequality in terms of the blow-up masses and get the existence of solutions in a non-coercive regime by means of a variational argument, which is based on some improved Moser-Trudinger inequalities.
\end{abstract}

\maketitle

\section{Introduction}

\medskip

We are concerned with the following equation:
\begin{equation} \label{eq}
  - \D u = \rho_1 \left( \frac{h_1 e^{u}}{\int_M
      h_1 e^{u} \,dV_g} - \frac{1}{|M|} \right) + a\rho_2 \left( \frac{h_2 e^{au}}{\int_M
      h_2 e^{au} \,dV_g} - \frac{1}{|M|} \right),
\end{equation}
where $h_1, h_2$ are smooth positive functions, $\rho_1, \rho_2$ are two positive parameters, $a\in(0,1)$  and $M$ is a compact orientable surface with Riemannian metric $g$. For simplicity we always assume the total volume of $M$ is $|M|=1$.

Equation \eqref{eq} is motivated by turbulence flows and it was first proposed by Onsager \cite{ons}. More precisely, in case the circulation number density is ruled to a probability measure, under a \emph{deterministic} assumption on the point vortex intensities, see for example \cite{sa-su}, the model is the following:
\begin{equation} \label{eq:prob}
  - \D u = \rho \int_{[-1,1]} \a \left( \frac{ e^{\a u}}{\int_M
      e^{\a u} \,dV_g} - \frac{1}{|M|} \right) \mathcal P(d\a),
\end{equation}
where $u$ corresponds to the stream function of a turbulent flow, $\mathcal P$ is a Borel probability measure defined on the interval $[-1,1]$ describing the point vortex intensity distribution and $\rho>0$ is a constant related to the inverse temperature. We restrict our discussion to the choice $\mathcal P(d\a)= \t \d_1(d\a)+(1-\t) \d_{a}(d\a)$, where $a\in(0,1)$ and $\t_1\in[0,1]$. Moreover, we will see it differs from the different-sign case (i.e. $a<0$) not only on the study of the blow-up local masses but also on the associated Moser-Trudinger inequality.

\medskip

When $\mathcal P(d\a) = \d_1$, equation \eqref{eq:prob} turns to be the standard mean field equation
\begin{equation} \label{eq:liouv}
  - \D u = \rho\left( \frac{h\, e^{u}}{\int_M  h\, e^{u} \,dV_g}- \frac{1}{|M|} \right).
\end{equation}
The latter equation is motivated both in geometry and mathematical physics as it is related to the prescribed Gaussian curvature problem and the mean field equation of Euler flows. The equation \eqref{eq:liouv} has attracted a lot of attentions in the past decades and we refer the readers to \cite{ba, cha, cha2, clmp, ki, ly, mal, s, tar}.

The study of \eqref{eq:liouv} is well-understood now. For our purpose we shall state the result on the quantization obtained in \cite{bm, li, li-sha}. For a sequence of solutions $\{u_k\}$ to \eqref{eq:liouv} which blows-up at $\bar x$, we have
\begin{equation} \label{liouv:quant}
\lim_{\d\to 0} \lim_{k\to+\infty} \rho_k \frac{\int_{B_\d(\bar x)}h\, e^{u_k}}{\int_M  h\, e^{u_k} \,dV_g} = 8\pi.
\end{equation}
A direct consequence of the above quantization result is the blow-up phenomenon can occur only for $\rho\in8\pi\N$.

Recently, the latter result was extended to the symmetric case in \cite{jwy}, where $\mathcal P(d\a)= \t \d_1(d\a)+(1-\t) \d_{-1}(d_{-1})$. It is known as the sinh-Gordon equation (see \cite{je-ya} for an analogous problem):
\begin{equation} \label{eq:sinh}
  - \D u = \rho_1 \left( \frac{h_1 e^{u}}{\int_M
      h_1 e^{u} \,dV_g} - \frac{1}{|M|} \right) - \rho_2 \left( \frac{h_2 e^{-u}}{\int_M
      h_2 e^{-u} \,dV_g} - \frac{1}{|M|} \right),
\end{equation}
In the asymmetric case of \eqref{eq} there are by now just partial results, see for example \cite{os, os1, ors, pi-ri, ri-ze}.

We treat the problem \eqref{eq} by following the ideas in \cite{jwy} which is concerned for the equation \eqref{eq:sinh} (this idea was first introduced in \cite{lin-wei-zh} for the $SU(3)$ Toda system).  However, the situation is quite different for the same-sign case, where we may meet a new blow-up phenomena as highlighted in Theorem \ref{th:blow-up} and in Remark \ref{rem:blow-up}. Before we state the results, let us first fix some notation for later use. Since the problem \eqref{eq} is invariant by translation we will often work in the following space:
$$
	\mathring H^1(M) = \left\{ u\in H^1(M) \; : \; \int_M u = 0 \right\}.
$$
For a sequence of solutions $u_k$ to \eqref{eq} as $\rho_{i,k} \to  \rho_i$, $i=1,2$, we consider the normalized functions
\begin{align} \label{norm}
u_{1,k} = u_k - \log \int_M h_{1} \,e^{u_{k}} \,dV_g, \qquad u_{2,k} = a u_k - \log \int_M h_{2} \,e^{a u_{k}} \,dV_g,
\end{align}
which satisfy
$$
	-\D u_{1,k} = \rho_{1,k} \left(h_1 e^{u_{1,k}}-1\right) + a\rho_{2,k} \left( h_2 e^{u_{2,k}}-1\right),
$$
and we define the blow-up set $S=S_1\cup S_2$, where
\begin{equation} \label{blow-up set}
S_i= \biggr\{ p\in M \,:\, \exists\{x_k\}\subset M, \, x_k\to p, \, u_{i,k}(x_k) \to +\infty \biggr\}, \quad i=1,2.
\end{equation}
The first result is the following
\begin{theorem} \label{th:blow-up}
Let $u_k \in \mathring H^1(M)$ be a sequence of solutions to \eqref{eq} with $a\in (0,1)$ and let $u_{i,k}$, $S$ be defined as above. Then, by passing to a subsequence if necessary, the following alternatives hold true:
\begin{enumerate}
	\item (compactness) $S=\emptyset$ and $u_k$ is uniformly bounded in $L^{\infty}(M)$.
	
	\item (blow-up) $S\neq\emptyset$ and it is finite. It holds
	\begin{equation} \label{limit}
		\rho_{1,k} h_1 e^{u_{1,k}}\rightharpoonup r_1 +  \sum_{p\in S_1} m_1(p) \d_p\,, \qquad  \rho_{2,k} h_2 e^{u_{2,k}} \rightharpoonup r_2 + \sum_{p\in S_2} m_2(p) \d_p\,,
	\end{equation}
	in the sense of measures, where $r_i \in L^1(M)\cap L^{\infty}_{loc}(M\setminus S_i),~i=1,2$ and
	$$
		m_i(p) = \lim_{r\to 0} \lim_{k\to+\infty} \rho_{i,k} \int_{B_r(p)} h_i e^{u_{i,k}} \,dV_g.
	$$
	Moreover, we have:
	
	\medskip
	
	\begin{itemize}
	\item[(a)] If $a\in\left[\frac 12,1\right)$, then $(m_1(p),m_2(p))$ is one of the following types:
	\begin{equation} \label{mass1}
	(8\pi,0), \quad \left(0,\dfrac{8\pi}{a^2}\right), \quad (m_1,m_2),
	\end{equation}
	where $m_1\in(0,8\pi)$, $m_2\in\left(0, \dfrac{8\pi}{a^2} \right)$. In addition $(m_1,m_2)$ satisfies 
	$$m_1+am_2 >  8\pi \quad \mbox{and} \quad (m_1+a m_2)^2=8\pi(m_1+m_2).$$
	
	\
	
	\item[(b)] If $a\in\left(0,\frac 12\right)$, then $(m_1(p),m_2(p))$ is one of the following types:
	\end{itemize}
	\begin{equation} \label{mass2}
	(8\pi,0), \quad \left(0,\dfrac{8\pi}{a^2}\right), \quad \left(8\pi,\dfrac{8\pi}{a^2}-\dfrac{16\pi}{a}\right),
    \quad (m_1,m_2),  \quad (8\pi m, \eta_m),
	\end{equation}
	where $m_1\in(0,8\pi)$, $m_2\in\left(\dfrac{8\pi}{a^2}-\dfrac{16\pi}{a}, \dfrac{8\pi}{a^2} \right)$ and
    $(m_1,m_2)$ satisfies
$$m_1+am_2 >  \dfrac{8\pi}{a}-8\pi \quad \mathrm{and} \quad (m_1+a m_2)^2=8\pi(m_1+m_2),$$
while $m\in\N$, $m>1$ and $\eta_m$ satisfies $(8\pi m+a\eta_m)^2=8\pi(8\pi m+\eta_m)$.
\end{enumerate}
\end{theorem}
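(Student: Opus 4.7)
The plan is to follow the blow-up analysis scheme of \cite{jwy} for the sinh-Gordon equation (itself adapted from \cite{lin-wei-zh} for the $SU(3)$ Toda system), modifying each step to handle two same-sign exponentials with different rates $1$ and $a\in(0,1)$. A preliminary observation, direct from \eqref{norm}, is that the difference $u_{2,k}-au_{1,k}$ is a spatial constant $c_k$, so the problem reduces to the scalar equation
\[
-\D u_{1,k}=\rho_{1,k}h_1e^{u_{1,k}}+a\rho_{2,k}e^{c_k}h_2e^{au_{1,k}}-(\rho_{1,k}+a\rho_{2,k}).
\]
A Brezis--Merle argument applied to this scalar equation, together with Harnack estimates away from the concentration points, yields the decomposition \eqref{limit}, shows that $S$ is finite, and gives the compactness alternative when $S=\emptyset$. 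The local Pohozaev identity on small balls $B_\delta(p)$ around each blow-up point $p\in S$, in the limit $k\to\infty$ then $\delta\to 0$, produces the universal algebraic relation $(m_1(p)+am_2(p))^2=8\pi(m_1(p)+m_2(p))$, which pins $(m_1,m_2)$ on an explicit conic and underlies both \eqref{mass1} and \eqref{mass2}.

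The next step is the classification of leading blow-up profiles. At each $p\in S$ I would pick $x_k\to p$ with $u_{1,k}(x_k)=M_k\to+\infty$ (or the analogous sequence for $u_{2,k}$) and rescale at the scale $\e_k$ that makes the dominant exponential of order one. Brezis--Merle together with Harnack implies the rescaled sequence converges to an entire finite-mass solution $U$ of
\[
-\D U=\alpha e^U+\beta e^{aU}\quad\text{in }\R^2,
\]
for some $\alpha,\beta\geq 0$ with $(\alpha,\beta)\neq(0,0)$. When only one of $\alpha,\beta$ is nonzero the Chen--Li classification produces the extremal pairs $(8\pi,0)$ and $(0,8\pi/a^2)$; when both are positive, a Pohozaev computation on $\R^2$ together with the asymptotic $U(y)\sim-4\log|y|$ yields a one-parameter family of profiles $(m_1,m_2)$ still lying on the conic of the previous step.

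The bubble-tower regime and the dichotomy at $a=1/2$ occupy the third step. When the two natural scales $\e_k^{(1)}=e^{-M_k/2}$ and $\e_k^{(2)}=e^{-(aM_k+c_k)/2}$ diverge at different rates, one subtracts the innermost Chen--Li profile and iterates the analysis on the residue, now governed by the slow exponential $e^{au}$. A residual Pohozaev balance shows that whether such an outer bubble, and further inner Liouville bubbles on top of it, are admissible is controlled by the comparison of the surplus $m_1+am_2-8\pi$ with the threshold $8\pi/a-16\pi$: for $a\geq 1/2$ the latter is non-positive and no extra bubble fits, giving only the continuous family in \eqref{mass1}; for $a<1/2$ the gap opens and the additional pairs $(8\pi,8\pi/a^2-16\pi/a)$ and $(8\pi m,\eta_m)$ in \eqref{mass2} arise, generated by towers of $m\in\N$, $m>1$, Liouville bubbles.

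The main obstacle lies in this last step: producing uniform pointwise control of $u_{1,k}$ on the intermediate annular region (typically an estimate of the form $u_{1,k}(x)\leq -2\log|x-x_k|+C$ outside the inner-bubble scale), identifying the precise threshold $a=1/2$ from the residual Pohozaev surplus, and verifying that each discrete pair $(8\pi m,\eta_m)$ in \eqref{mass2} is genuinely realized by a well-posed tower. This is exactly where the same-sign structure creates genuinely new phenomena with respect to \cite{jwy}, since here both exponentials reinforce rather than cancel.
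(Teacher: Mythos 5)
Your overall strategy -- reduction to a scalar equation via $u_{2,k}=au_{1,k}+c_k$, a selection/rescaling process leading to entire limit problems, Pohozaev identities, and a dichotomy at $a=1/2$ -- is the same as the paper's, and the preliminary steps (Brezis--Merle plus Harnack for \eqref{limit} and finiteness of $S$, the macroscopic Pohozaev identity at a fixed blow-up point giving the conic $(m_1+am_2)^2=8\pi(m_1+m_2)$) are fine. The gaps appear exactly where the content of the theorem lies. First, for the fully blown-up limit $-\D U=\alpha e^U+\beta e^{aU}$ your asymptotic $U\sim-4\log|y|$ is wrong: finite-mass solutions decay like $-\eta\log|y|$ with $\eta$ the total (weighted) mass, and forcing $\eta=4$ together with the conic would give $m_2=0$, i.e.\ no mixed family at all. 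More importantly, the strict bounds that appear in the statement, $m_1+am_2>8\pi$ for $a\in[\tfrac12,1)$, $m_1+am_2>\tfrac{8\pi}{a}-8\pi$ for $a\in(0,\tfrac12)$, and the upper bound $\tfrac{8\pi}{a}$, are precisely the existence range for this limit equation established by Tarantello and Poliakovsky--Tarantello (\cite{tar2}, \cite{pt}); they do not follow from an entire-space Pohozaev computation (which only reproduces the conic), and the paper imports them as a black box. Without this input you cannot pin down the ranges of $(m_1,m_2)$ in \eqref{mass1}, \eqref{mass2}.

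Second, the mechanism you propose for the discrete family $(8\pi m,\eta_m)$ is not the right one: in the paper these masses do not come from concentric bubble towers at a single point, but from a \emph{group} of $m\ge 2$ spatially separated bubbling disks, each carrying mass $(8\pi,0)+o(1)$ with the second component decaying slowly, and $\eta_m$ is then forced by a Pohozaev identity at the group scale; the exclusion of any other multi-disk configuration rests on the incompatibility of the disk-level and group-level Pohozaev identities (\eqref{2.21} versus \eqref{2.22}) when more than one fast-decay disk lies in the same group. Your tower iteration provides no mechanism producing the factor $m$ and no argument ruling out other combinations. Third, every Pohozaev identity at an intermediate scale requires \emph{both} components to decay fast on the chosen circle; securing this is the technical heart of the proof (Lemma \ref{le2.1}, Remark \ref{re2.1}, Proposition \ref{pr2.3}, via the monotonicity of $\bar u_{i,k}+2\log r$ driven by the accumulated local masses), and it is exactly what your closing ``main obstacle'' paragraph leaves open. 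In particular the $a=\tfrac12$ threshold enters the paper in two precise places: the lower endpoint $\max\bigl\{8\pi,\tfrac{8\pi}{a}-8\pi\bigr\}$ of the admissible range for the limit equation, and the sign of $\tfrac{d}{dr}\bigl(\bar u_{2,k}(r)+2\log r\bigr)=\tfrac{2-a\sigma_1^k(r)-a^2\sigma_2^k(r)}{r}$ after an $(8\pi,0)$ bubble, which is nonpositive iff $a\ge\tfrac12$; your ``surplus versus threshold'' heuristic points in this direction but is not a substitute for these estimates.
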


\medskip

We list here some remarks concerning the latter result.

\begin{rem} \label{rem:blow-up}
Theorem \ref{th:blow-up} exhibits drastic differences from the standard mean field equation \eqref{eq:liouv} and the opposite-sign case, see for example \eqref{eq:sinh}. Indeed, the local blow-up masses are not quantized any more as we have the type $(m_1,m_2)$ in \eqref{mass1}, \eqref{mass2} for equation \eqref{eq}. This phenomenon is due to the same-sign structure: the reason is that in the opposite-sign case the energy limit is given by a globally defined solution of $-\D v = e^v$ (or $-\D v = a^2e^{-av}$), while our problem could exhibit a \underline{fully blow-up} phenomenon which may lead us to consider an equation of the form $-\D v = e^v + a e^{av}$: the latter equation was recently considered in \cite{tar2} and gives rise to the type $(m_1,m_2)$ in \eqref{mass1}, \eqref{mass2}.
\end{rem}

\begin{rem} \label{rem:min-mass}
From the description in \eqref{mass2} if $a\in\left(0,\frac 12\right)$ and $m_2(p)\neq 0$ we could get a minimum mass. This in turn gives us some partial information about the boundedness of the solutions to \eqref{eq}, see Theorem \ref{th:comp} (see also the existence result of Theorem \ref{th:exist}). On the other hand, for $a\in\left[\frac 12,1\right)$ a minimum mass is not guaranteed and we can not derive either boundedness properties or the existence of solutions to \eqref{eq}.
\end{rem}

\begin{rem} \label{rem:resid}
Different from the opposite-sign case, we will show in Theorem \ref{th:comp} that if $S_1 \neq \emptyset$ the residual $r_1$ of the first component $u_{1,k}$ in \eqref{limit} vanishes, i.e. $r_1\equiv 0$. We have been informed that this fact was already known by Prof. T. Ricciardi and Prof. T. Suzuki. This crucial property will be used in the proof of the compactness result of the Theorem \ref{th:comp}.
\end{rem}

\begin{rem} \label{rem:m-t}
By means of the local blow-up mass \eqref{mass1}, \eqref{mass2} in Theorem \ref{th:blow-up} we can interpret the sharp Moser-Trudinger inequality associated to \eqref{eq} obtained in \cite{ors}, see the discussion in Section \ref{sec:var}.
\end{rem}

\begin{rem} \label{rem:constr}
In a forthcoming paper, we shall consider the existence of blowing-up solutions to \eqref{eq} with $(m_1,m_2)$ type local mass, see \eqref{mass1}, \eqref{mass2}. On the other hand, we believe the last possibility in \eqref{mass2} could be ruled out.
\end{rem}

\medskip

The strategy of the proof for Theorem \ref{th:blow-up} is the following: by means of a selection process we can find a finite number of disks where, after scaling, the blowing-up sequence of solutions to \eqref{eq} tends to some globally defined Liouville-type equations, see Remark \ref{rem:blow-up}. In each disk the local mass is then given by  the classification results on the energy limits of the globally defined Liouville equations and the Pohozaev identity. Finally, we study the combination of the bubbling disks and get the desired result.

By exploiting the above result we will derive the following compactness property, which will be used later on.
\begin{theorem} \label{th:comp}
Let $u \in \mathring H^1(M)$ be a solution to \eqref{eq}. Suppose that $a\in\left(0,\frac 12\right)$ and that $\rho_1\notin 8\pi\N$, $\rho_2<\dfrac{8\pi}{a^2}-\dfrac{16\pi}{a}$. Then, there exists a fixed constant $C>0$ such that
$$
	|u(x)|\leq C, \qquad \forall x \in M.
$$
\end{theorem}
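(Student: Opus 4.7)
The plan is a contradiction argument driven by Theorem~\ref{th:blow-up}. If the claimed uniform bound failed, there would exist a sequence $\{u_k\}\subset\mathring H^1(M)$ of solutions to \eqref{eq} with $\rho_{i,k}\equiv\rho_i$ and $\|u_k\|_{L^\infty(M)}\to+\infty$. Theorem~\ref{th:blow-up} then excludes the compactness alternative, leaving a nonempty finite blow-up set $S=S_1\cup S_2$ with local masses $(m_1(p),m_2(p))$ drawn from \eqref{mass2} (since $a\in(0,1/2)$). Testing the weak convergence \eqref{limit} against the constant function $1$ and using the normalization $\int_M h_i\,e^{u_{i,k}}\,dV_g=1$ gives the total-mass identities
\begin{equation*}
\rho_1=\int_M r_1\,dV_g+\sum_{p\in S_1} m_1(p),\qquad \rho_2=\int_M r_2\,dV_g+\sum_{p\in S_2} m_2(p).
\end{equation*}

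The key observation is a dichotomy on $m_1(p)$ read off from \eqref{mass2}: in every type \emph{except} the continuous family $(m_1,m_2)$, the value $m_1(p)$ is a nonnegative integer multiple of $8\pi$ (namely $0$ from $(0,\tfrac{8\pi}{a^2})$; $8\pi$ from $(8\pi,0)$ or $(8\pi,\tfrac{8\pi}{a^2}-\tfrac{16\pi}{a})$; and $8\pi m$ with $m\geq 2$ from $(8\pi m,\eta_m)$). The argument splits into two cases accordingly.

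\emph{Case 1:} some $p_0\in S$ has mass in the continuous family $(m_1,m_2)$. Then \eqref{mass2} itself gives directly $m_2(p_0)>\tfrac{8\pi}{a^2}-\tfrac{16\pi}{a}$; combined with $r_2\geq 0$ and the nonnegativity of the remaining $m_2(p)$, this yields $\rho_2>\tfrac{8\pi}{a^2}-\tfrac{16\pi}{a}$, contradicting the hypothesis on $\rho_2$. \emph{Case 2:} no such point exists, so every $m_1(p)$ is a nonnegative multiple of $8\pi$. If $S_1\neq\emptyset$, Remark~\ref{rem:resid} forces $r_1\equiv 0$, and hence $\rho_1=\sum_{p\in S_1}m_1(p)$ is a sum of positive multiples of $8\pi$, giving $\rho_1\in 8\pi\N$ against the hypothesis $\rho_1\notin 8\pi\N$. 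If instead $S_1=\emptyset$ (while $S_2\neq\emptyset$), every $p\in S_2$ has $m_1(p)=0$, and the only type in \eqref{mass2} compatible with $m_1=0$ is $(0,\tfrac{8\pi}{a^2})$; whence $\rho_2\geq\tfrac{8\pi}{a^2}>\tfrac{8\pi}{a^2}-\tfrac{16\pi}{a}$, another contradiction.

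The obstacle in this proof is not analytic but organizational: once Theorem~\ref{th:blow-up} and the residual-vanishing statement of Remark~\ref{rem:resid} are in hand, everything reduces to the above bookkeeping on \eqref{mass2}. The subtle point to verify is that the ``exotic'' fifth type $(8\pi m,\eta_m)$ lands on the discrete side of the dichotomy (i.e., $m_1=8\pi m\in 8\pi\N$), so it does not obstruct Case~2; this is precisely why the hypothesis $\rho_1\notin 8\pi\N$ alone suffices to rule out the discrete types, while the separate upper bound on $\rho_2$ is precisely calibrated to exclude the continuous $(m_1,m_2)$ family.
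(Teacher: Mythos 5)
Your endgame bookkeeping on the mass list \eqref{mass2} is exactly the paper's: the hypothesis $\rho_2<\frac{8\pi}{a^2}-\frac{16\pi}{a}$ rules out the continuous family $(m_1,m_2)$ (whose second component exceeds that threshold), all remaining types have $m_1(p)$ equal to a nonnegative multiple of $8\pi$, and in the case $S_1=\emptyset$, $S_2\neq\emptyset$ the only admissible type is $\left(0,\frac{8\pi}{a^2}\right)$, which again violates the bound on $\rho_2$. The genuine gap is the step in your Case~2 where you write that ``Remark~\ref{rem:resid} forces $r_1\equiv 0$.'' Remark~\ref{rem:resid} is not an available result: it only \emph{announces} that the vanishing of the residual $r_1$ when $S_1\neq\emptyset$ will be shown in the proof of Theorem~\ref{th:comp}, i.e.\ inside the very argument you are constructing, so invoking it here is circular. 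Without $r_1\equiv 0$, testing \eqref{limit} against $1$ only yields $\rho_1\geq\sum_{p\in S_1}m_1(p)$ rather than equality, the conclusion $\rho_1\in 8\pi\N$ disappears, and the contradiction with $\rho_1\notin 8\pi\N$ cannot be drawn.

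Supplying that missing claim is in fact the main analytic content of the paper's proof. The argument there runs: if $r_1\not\equiv 0$, then $\int_M h_1e^{u_k}\,dV_g$ stays bounded, hence so does $\int_M h_2e^{au_k}\,dV_g$ by H\"older, and \eqref{eq} can be rewritten with normalizing constants $C_{ik}$ bounded between positive constants. Near a blow-up point $p$ one decomposes $u_k=v_{1k}+v_{2k}$ on $B_r(p)$, where $v_{2k}$ carries the $e^{au_k}$ term, the constants and the boundary data $u_k$, and is shown to be uniformly bounded (using $e^{au_k}\in L^{1/a}(B_r(p))$, the Green representation formula to bound $u_k$ away from $S$, and elliptic estimates), while $v_{1k}$ solves a scalar Liouville problem $\Delta v_{1k}+h_ke^{v_{1k}}=0$ with bounded coefficient $h_k$ and zero boundary values; the Brezis--Merle concentration result then gives $h_ke^{v_{1k}}\rightharpoonup 8\pi\delta_p$, which is incompatible with the assumed boundedness of $\int_M h_1e^{u_k}\,dV_g$. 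You need to include this (or an equivalent) argument before your Case~2 can close; with it, the rest of your proof is sound and follows the same route as the paper.
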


Notice that the assumption on $\rho_2$ is related to the minimum local blow-up mass of $(m_1,m_2)$ in \eqref{mass2}.

\medskip

In the second part of the paper we will introduce a variational argument to get the existence result in a non-coercive regime, see Theorem \ref{th:exist}. The argument will rely on the analysis developed for the mean field equation \eqref{eq:liouv}: a similar approach was used in \cite{mal-ndi,zhou} in treating the $SU(3)$ Toda system and the sinh-Gordon equation \eqref{eq:sinh}, respectively. The Euler-Lagrange functional of \eqref{eq:liouv} is given by $I_\rho : H^1(M)\to\R$,
\begin{equation} \label{liouv:func}
	I_\rho(u) = \frac{1}{2}\int_M |\nabla u|^2 \,dV_g - \rho  \left( \log\int_M h\,  e^u \,dV_g  - \int_M u \,dV_g \right).
\end{equation}
By the standard Moser-Trudinger inequality
\begin{equation}\label{ineq}
	8\pi \log\int_M e^{u-\ov u} \, dV_g \leq \frac 12 \int_M |\n u|^2\,dV_g + C_{M,g}, \qquad \ov u= \fint_M u\,dV_g,
\end{equation}
we have $I_\rho$ is bounded from below and coercive if $\rho<8\pi$. Then we can apply the direct minimization methods. For larger values of the parameter the problem becomes subtler and one needs the improved version of  the Moser-Trudinger inequality obtained in \cite{ch-li} which are based on some macroscopic division of $e^u$  over the surface. It is a key point to show that if $\rho<8(k+1)\pi, k\in\N$ and $I_\rho(u)$ is sufficient negative, then $e^u$ can not be spreading around and as a result it starts to accumulate around at most $k$ points. Such configurations are resembled by the $k$-th \emph{formal barycenters} of $M$
\begin{equation}	\label{M_k}
	M_k = \left\{ \sum_{i=1}^k t_i \d_{x_i} \, : \, \sum_{i=1}^k t_i=1, \,  x_i\in M \right\}.
\end{equation}
It is then possible to construct suitable min-max scheme based on the latter set, exploiting the crucial fact that it has non-trivial topology, and finally we can get existence of solutions to \eqref{eq:liouv}.

For the general equation \eqref{eq} there is an analogous associated functional $J_\rho : H^1(M)\to\R$, $\rho=(\rho_1,\rho_2)$
\begin{align} \label{func}
\begin{split}
	J_\rho(u) &= \frac{1}{2}\int_M |\nabla u|^2 \,dV_g - \rho_1  \left( \log\int_M h_1  e^u \,dV_g  - \int_M u \,dV_g \right) \\
	&-  {\rho_2}  \left( \log\int_M h_2  e^{a u} \,dV_g  - \int_M  a u \,dV_g \right).
\end{split}	
\end{align}
In this framework a sharp Moser-Trudinger inequality was obtained in \cite{ors} (actually, in a much more general setting). We point out that the case $supp \,\mathcal P \subset [0,1]$ presents striking differences from the opposite-sign case, see for example equation \eqref{eq:sinh}. We postpone this discussion to Section \ref{sec:var}: we suspect that the difference is due to the third possibility of the local blow-up mass \eqref{mass1}, \eqref{mass2} in Theorem \ref{th:blow-up}.

The literature on the existence issue is limited in this framework: it turns out that the sinh-Gordon case is more treatable and one can get both existence and the multiplicity results, see \cite{bjmr, jev, jev2, jev3, jwy2}. In the asymmetric case there are very few results: in the recent paper \cite{rtzz} the authors provide existence results to \eqref{eq} under the assumption that $\rho_i$ are not too large and the parameter $a$ is sufficiently small. This work is actually motivated by the latter paper and our aim is to give both a possible sharper condition on $a$ as well as some interpretation for the condition on $a$ based on the local blow-up mass obtained from Theorem \ref{th:blow-up}.

Exploiting the above argument for the mean field equation \eqref{eq:liouv} and the compactness property in Theorem~\ref{th:comp}, we can get the following existence result.
\begin{theorem} \label{th:exist}
Suppose that $a\in\left(0,\frac 12\right)$ and that $\rho_1\notin 8\pi\N$, $\rho_2<\dfrac{8\pi}{a^2}-\dfrac{16\pi}{a}$. Then, equation \eqref{eq} is solvable.
\end{theorem}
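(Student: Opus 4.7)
The plan is to implement the variational strategy developed for the scalar equation \eqref{eq:liouv}, adapted to two-component systems as in \cite{mal-ndi, zhou}, and to close the argument using Theorem \ref{th:comp}. Since $\rho_1 \notin 8\pi\N$, let $k \geq 0$ be the integer with $\rho_1 \in (8\pi k, 8\pi(k+1))$. If $k = 0$, the sharp Moser-Trudinger inequality of \cite{ors}, combined with the hypothesis $\rho_2 < 8\pi/a^2 - 16\pi/a$ (which places $(\rho_1,\rho_2)$ strictly inside the admissibility region dictated by the fully blowing-up profile in \eqref{mass2}), gives coercivity of $J_\rho$ and a solution by direct minimization; henceforth I focus on $k \geq 1$.

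\emph{First step (Improved Moser-Trudinger).} Adapting the Chen-Li argument \cite{ch-li}, I would prove that for every $\e_0 > 0$ there exists $C$ such that, whenever $h_1 e^u / \int_M h_1 e^u \, dV_g$ cannot concentrate mass larger than $1-\e_0$ on any union of $k$ small geodesic balls, one has
$$
8\pi(k+1)\left(\log \int_M h_1 e^u\, dV_g - \int_M u\, dV_g\right) + \rho_2\left(\log \int_M h_2 e^{au}\, dV_g - a\int_M u\, dV_g\right) \leq \frac{1}{2}\int_M |\n u|^2\, dV_g + C.
$$
Combined with $\rho_1 < 8\pi(k+1)$, this forces $J_\rho(u) \geq -C$ on such functions, so that for $L$ large every $u \in \{J_\rho \leq -L\}$ has $h_1 e^u$ concentrated near at most $k$ points, i.e.\ close in the Kantorovich-Rubinstein distance to some element of the formal barycenter space $M_k$ defined in \eqref{M_k}.

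\emph{Second step (Topological min-max).} Following \cite{mal-ndi}, I would construct a continuous projection $\Psi : \{J_\rho \leq -L\} \to M_k$ and, in the other direction, test functions $\Phi_\l : M_k \to H^1(M)$ built from rescaled standard bubbles placed at the atoms of each $\s \in M_k$, satisfying $J_\rho(\Phi_\l(\s)) \to -\infty$ uniformly in $\s$ as $\l \to +\infty$ and $\Psi \circ \Phi_\l$ homotopic to the identity on $M_k$. The hypothesis $\rho_2 < 8\pi/a^2 - 16\pi/a$ intervenes precisely here: the extra contribution to $J_\rho(\Phi_\l(\s))$ coming from the $e^{au}$ term, which asymptotically matches the second-component mass in the profile $(8\pi, 8\pi/a^2 - 16\pi/a)$ of \eqref{mass2}, must be absorbed by the dominant bubbling of the first component. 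Since $M_k$ is non-contractible, a standard min-max over the cone on $M_k$ then yields a Palais-Smale sequence at some critical level.

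\emph{Third step (From Palais-Smale to a solution).} To extract a critical point, I would use Struwe's monotonicity trick, varying $\rho_1$ in a small interval inside $(8\pi k, 8\pi(k+1))$: for a.e.\ such parameter the min-max yields a \emph{bounded} Palais-Smale sequence, which by Theorem \ref{th:comp} converges to a classical solution of the perturbed equation. Letting the perturbation go to zero and invoking Theorem \ref{th:comp} once more (the hypotheses are open in $\rho$) produces the sought-after solution of \eqref{eq} at $(\rho_1, \rho_2)$. The main obstacle is the sharp form of the improved Moser-Trudinger inequality in the first step: unlike the sinh-Gordon case, the same-sign coupling allows genuine joint bubbling of type $(m_1,m_2)$ from \eqref{mass1}, \eqref{mass2}, so one must carefully track how a concentration of $h_1 e^u$ forces a parallel concentration of $h_2 e^{au}$ in order to reach the sharp threshold dictated by $\rho_2 < 8\pi/a^2 - 16\pi/a$.
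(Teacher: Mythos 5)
Your proposal follows essentially the same route as the paper: the improved Moser--Trudinger inequality forcing concentration onto the barycenter set $M_k$, the retraction $\Psi$ and the bubble-type test functions $\Phi_\l$ with $\Psi\circ\Phi_\l$ homotopic to the identity, a min-max over the cone on $M_k$ using non-contractibility of $M_k$, Struwe's monotonicity trick, and finally the compactness of Theorem \ref{th:comp} to pass to the limit. One small correction: the hypothesis $\rho_2<\frac{8\pi}{a^2}-\frac{16\pi}{a}$ is not needed in the test-function estimate (there the $\rho_2$-term has a favorable sign by Jensen's inequality and is simply discarded); it enters instead through the Moser--Trudinger inequality \eqref{m-t} and its improved version, and through the compactness result.
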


\medskip

The assumptions in the latter result are somehow sharp in the sense that they are related to the minimum local blow-up mass of $(m_1,m_2)$ given in \eqref{mass2} of Theorem~\ref{th:blow-up}.

We organize the paper in the following way: in Section \ref{sec:blow-up} we study the blow-up phenomenon related to
equation \eqref{eq} in bounded domains, we obtain the possible values of the local blow-up masses and then give the proof of the Theorem \ref{th:blow-up} and Theorem \ref{th:comp}, in Section \ref{sec:var} we introduce the min-max scheme and obtain the existence result of Theorem \ref{th:exist}.

\

\begin{center}
\textbf{Notation}
\end{center}

\medskip

The average of $u$ is denoted by $\ov u = \fint_M u \,dV_g.$ The sublevels of the functional $J_\rho$ will be denoted by
\begin{equation} \label{sub}
	J_\rho^L = \bigr\{ u\in H^1(M) \,:\, J_\rho(u) \leq L  \bigr\}.
\end{equation}
Letting $\mathcal M(M)$ be the  set of all Radon measures on $M$, the Kantorovich-Rubinstein distance is defined as
\begin{equation} \label{dist}
	\dkr (\mu_1,\mu_2) = \sup_{\|f \|_{Lip}\leq 1} \left| \int_M f \, d\mu_1 - \int_M f\,d\mu_2 \right|, \qquad \mu_1,\mu_2 \in \mathcal M(M).
\end{equation}
The symbol $B_r(p)$ stands for the open metric ball of
radius $r$ and center at $p$. When there is no ambiguity we will write $B_r\subset\R^2$ for balls which are centered at $0$.

Throughout the paper the letter $C$ will stand for positive constants which
are allowed to vary among different formulas and even within the same lines.
To stress the dependence of the constants on some
parameter  we add subscripts to $C$, for example $C_\d$. We will write $o_{\alpha}(1)$ to denote
quantities that tend to $0$ as $\alpha \to 0$ or $\alpha \to
+\infty$; the symbol
$O_\alpha(1)$ will be used for bounded quantities.

\

\section{Blow-up analysis} \label{sec:blow-up}
In this section we are going to perform the blow-up analysis and prove Theorems~\ref{th:blow-up}, \ref{th:comp}. The main point is to determine the local masses in \eqref{mass1}, \eqref{mass2}: to this end we shall restrict our attention to a blow-up sequence of solutions to \eqref{eq}. Then it is standard to get the desired conclusion, see for example Section 5 in \cite{lin-zh}. On the other hand, the convergence in \eqref{limit} is by now well-known and is obtained by similar arguments as in \cite{ri-ze}.

\medskip

Let $u_k$ be a sequence of blow-up solutions to the following equation with $(\rho_{1,k},\rho_{2,k})\to(\rho_1,\rho_2)$:
\begin{equation}
\label{seq}
	-\D u_{k} =  \rho_{1,k} h_1e^{u_{1,k}} + a  \rho_{2,k} h_2 e^{u_{2,k}} \qquad \mbox{in } B_1,
\end{equation}
where $u_{1,k}, u_{2,k}$ are defined in \eqref{norm}, such that $\int_M u_k \,dV_g = 0$ and $0$ is the only blow-up point in $B_1$, more precisely:
\begin{equation}
\label{a1}
\max_{K\subset\subset B_1\setminus\{0\}} u_{i,k}\leq C(K),\quad \max_{x\in B_1,\, i=1,2}\{u_{i,k}(x)\}\rightarrow\infty.
\end{equation}
Furthermore, we suppose that
\begin{equation}
\label{a2}
h_1(0)=h_2(0)=1, ~ \frac1C\leq h_i(x)\leq C, ~ \|h_i(x)\|_{C^3(B_1)}\leq C, \qquad \forall x\in B_1, ~i=1,2,
\end{equation}
for some constant $C>0$. We can assume
\begin{align}
\label{a3}
\begin{split}
&|u_{i,k}(x)-u_{i,k}(y)|\leq C,\qquad \forall~x,y\in\partial B_1, \qquad \int_{B_1} \rho_{i,k}h_i e^{u_{i,k}}\leq C,
\end{split}
\end{align}
where $C$ is independent of $k.$ Indeed, the assumption on the bounded oscillation can be obtained through the Green's representation. To simplify the notation, the local masses are usually defined as
\begin{align}
\label{localmass}
\sigma_i=\lim_{\delta\rightarrow0}\lim_{k\rightarrow\infty}\frac{1}{2\pi}\int_{B_{\delta}}\rho_{i,k} h_i e^{u_{i,k}}.
\end{align}

The result concerning the local masses in Theorem \ref{th:blow-up} can be rephrased as follows:
\begin{theorem}
\label{th:quantization}
Let $\sigma_i$ be defined as in (\ref{localmass}). Suppose $u_k$ satisfies (\ref{seq}), (\ref{a1}), (\ref{a3}), with $a\in(0,1)$ and $h_i$ satisfy (\ref{a2}). Then, it holds:
\begin{itemize}
\item[(a)] If $a\in\left[\frac 12,1\right)$, then $(\s_1,\s_2)$ is one of the following types:
$$
	(4,0), \quad \left(0,\dfrac{4}{a^2}\right), \quad (\a,\b),
$$
with $\a\in(0,4),\, \b\in\left(0, \dfrac{4}{a^2} \right)$ such that 
$$ \a+a\b > 4 \quad \mbox{and} \quad (\a+a \b)^2=4(\a+\b).$$
\

\item[(b)] If $a\in\left(0,\frac 12\right)$, then $(\s_1,\s_2)$ is one of the following types:
$$
	(4,0), \quad \left(0,\dfrac{4}{a^2}\right), \quad \left(4,\dfrac{4}{a^2}-\dfrac{8}{a}\right), \quad (\a,\b), \quad (4m,\g_m),
$$
with $\a\in(0,4),\, \b\in\left(\dfrac{4}{a^2}-\dfrac{8}{a}, \dfrac{4}{a^2} \right)$ such that 
$$ \a+a\b > \dfrac{4}{a}-4 \quad \mbox{and} \quad (\a+a \b)^2=4(\a+\b)$$ 
and with $m\in\N$, $m>1$, $\g_m$ such that $(4m+a \g_m)^2=4(4m+\g_m)$.
\end{itemize}
\end{theorem}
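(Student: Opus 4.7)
The plan is to adapt the Lin--Wei--Zhang selection-and-merging scheme used for the sinh-Gordon case in \cite{jwy}, taking into account the same-sign specificity that forces us to deal with Tarantello's coupled limiting equation \cite{tar2}. From the blow-up sequence \eqref{seq}, I first run a finite bubble-selection procedure producing disjoint disks $B(x_{j,k},r_{j,k})$ in which, after rescaling at the natural scale $\lambda_{j,k}=\max\{u_{1,k}(x_{j,k}),u_{2,k}(x_{j,k})\}$, the sequence converges in $C^2_{loc}$ to an entire finite-mass solution of one of the three limit equations
\begin{equation*}
-\Delta v = e^v,\qquad -\Delta v = a^2 e^{av},\qquad -\Delta v = e^v + a e^{av}.
\end{equation*}
The first two produce a single-bubble mass of $(4,0)$ or $(0,4/a^2)$ by the Chen--Li classification; the third arises precisely in the \emph{fully blow-up} regime of Remark~\ref{rem:blow-up}, when both components blow up at comparable rate, and by Tarantello's analysis it contributes a pair $(\alpha,\beta)$ with $\alpha,\beta>0$ obeying the Pohozaev relation $(\alpha+a\beta)^2=4(\alpha+\beta)$.

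Between two clusters of bubbles the sequence $u_k$ behaves, on intermediate annuli, like $-2(\sigma_1^{\mathrm{int}}+a\sigma_2^{\mathrm{int}})\log|x|$ up to bounded oscillation, by Brezis--Merle and Harnack type estimates applied to \eqref{seq}. Combining clusters and applying the Pohozaev identity on annuli that enclose them, the boundary integrals computed from the logarithmic expansion combine into the quadratic relation
\begin{equation*}
(\sigma_1+a\sigma_2)^2 \;=\; 4(\sigma_1+\sigma_2),
\end{equation*}
which is propagated recursively through every merge.

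The classification then becomes an arithmetic enumeration of admissible combinations. A pure $(4,0)$ bubble satisfies $\sigma_1+a\sigma_2=4$, while a pure $(0,4/a^2)$ bubble satisfies $\sigma_1+a\sigma_2=4/a$. Merging a $(4,0)$ with a $(0,4/a^2)$ via the above quadratic produces $(4,4/a^2-8/a)$, which is admissible (i.e.\ has $\sigma_2\geq 0$) only when $a<1/2$; this is the source of the dichotomy in the statement. In the $a<1/2$ regime, iterating by attaching further $(4,0)$ bubbles generates the $(4m,\gamma_m)$ sequence, with $\gamma_m$ solving $(4m+a\gamma_m)^2=4(4m+\gamma_m)$. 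The mixed single-bubble type $(\alpha,\beta)$ comes directly from the third limit equation, and the strict lower bound $\alpha+a\beta>4$ (resp.\ $>4/a-4$) expresses the requirement that the Tarantello limit is genuinely coupled and not reducible to a pure bubble.

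The main obstacle will be the merging step: one has to establish the sharp logarithmic expansion of $u_{i,k}$ on the necks between clusters with no spurious harmonic correction, so that the boundary contributions in the Pohozaev identity combine cleanly into the stated quadratic relation. This hinges on the residual-vanishing property $r_1\equiv 0$ announced in Remark~\ref{rem:resid}, which is what guarantees the cancellation. Equally delicate is the combinatorial enumeration in case (b): ruling out a priori possibilities such as merging two $(0,4/a^2)$ bubbles, or combining mixed $(\alpha,\beta)$ types with pure bubbles beyond those listed, requires carefully using both the quadratic constraint and the positivity of the component masses at every scale.
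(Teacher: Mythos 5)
Your overall framework coincides with the paper's (selection process producing bubbles governed by $-\Delta v=e^v$, $-\Delta v=a^2e^{av}$, or the coupled equation $-\Delta v=e^v+ae^{av}$, with the Tarantello/Poliakovsky--Tarantello classification giving the $(\alpha,\beta)$ type and the bound $\alpha+a\beta\in(\max\{4,\frac4a-4\},\frac4a)$, followed by Pohozaev identities at fast-decay radii and a merging of bubbling regions). The genuine problem is the merging arithmetic. Your mechanism for the third mass in case (b) --- ``merging a $(4,0)$ with a $(0,\frac{4}{a^2})$ produces $(4,\frac{4}{a^2}-\frac{8}{a})$'' --- cannot occur: local masses are nondecreasing in the radius, so any region containing a $(0,\frac{4}{a^2})$ bubble has $\sigma_2\geq \frac{4}{a^2}>\frac{4}{a^2}-\frac{8}{a}$. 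In fact the paper proves the opposite: whenever a bubbling disk carries one of the masses $(0,\frac{4}{a^2})$, $(4,\frac{4}{a^2}-\frac{8}{a})$ or $(\alpha,\beta)$, both components are fast decaying on its boundary and that disk must be \emph{alone} in its group, because the disk-wise Pohozaev identities \eqref{2.21} and the collective identity \eqref{2.22} are incompatible when $m\geq 2$. The type $\left(4,\frac{4}{a^2}-\frac{8}{a}\right)$ arises instead inside a \emph{single} blow-up disk: after a $(4,0)$ bubble one has $a\sigma_1+a^2\sigma_2\approx 4a$, which is $<2$ exactly when $a<\frac12$, so by \eqref{deriv} the sign of $\frac{d}{dr}\left(\bar u_{2,k}(r)+2\log r\right)$ is no longer controlled, the second component may pass to slow decay and accumulate mass in the surrounding annulus, and at the next fast-decay radius the Pohozaev identity $(4+a\sigma_2)^2=4(4+\sigma_2)$ forces $\sigma_2=\frac{4}{a^2}-\frac{8}{a}$ (its nonzero root). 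For $a\geq\frac12$ that derivative is $\leq 0$, and moreover $(4m+a\gamma_m)^2=4(4m+\gamma_m)$ has no solution for $m>1$: these two facts, not the mere positivity of $\frac{4}{a^2}-\frac{8}{a}$, are the source of the dichotomy between (a) and (b).

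Two further points. First, the clean evaluation of the Pohozaev boundary terms on necks is not guaranteed by the residual-vanishing $r_1\equiv 0$ of Remark \ref{rem:resid}: that is a global statement on $M$ used only in the proof of Theorem \ref{th:comp}; in the local quantization what is needed is the fast-decay property of \emph{both} components on suitably chosen circles (Lemma \ref{le2.1}, Remark \ref{re2.1}), together with the structure result that in a group with several disks the only admissible configuration is $\sigma_1\approx 4$ on each disk with $u_{1,k}$ fast and $u_{2,k}$ slow decaying, which is what produces $(4m,\gamma_m)$ at the group scale. Second, you explicitly leave the ``combinatorial enumeration'' (which merges are excluded) as an open difficulty; but that exclusion --- the at-most-one-disk-per-group claim and the analysis of multi-disk groups in Proposition \ref{pr2.3} --- is precisely the heart of the argument, so as written the proposal has a real gap exactly at the step where the list of admissible pairs is derived.
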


\medskip

In order to prove the Theorem \ref{th:quantization}, we introduce a suitable selection process for the equation \eqref{seq}.
\begin{proposition}
\label{pr2.1}
Let $u_k$ be a sequence of blow-up solutions to \eqref{seq} with $u_{1,k},u_{2,k}$ defined in \eqref{norm}. Suppose $u_{i,k}$ satisfies \eqref{a1} and \eqref{a3} with $a\in(0,1)$ and $h_i$ satisfies \eqref{a2}, $i=1,2$. Set $M_k(x)=\max_{i=1,2}\{ u_{i,k}(x) \}$. Then, there exist finite sequences of points $\Sigma_k:=\{x_1^k,\cdots,x_m^k\}$ (all $x_j^k\rightarrow0,~j=1,\cdots,m$) and positive numbers $l_1^k,\cdots,l_m^k\rightarrow0$ such that, 
\begin{enumerate}
  \item $M_{k,j}=\max_{i=1,2}\{u_{i,k}(x_j^k)\}=\max_{B_{l_j^k}(x_j^k), i=1,2}\{u_{i,k}\}$ for $j=1,\cdots,m.$
  \item $\exp(\frac12 M_{k,j})l_j^k\rightarrow\infty$ for $j=1,\cdots,m.$
  \item Let $\varepsilon_{k,j}=e^{-\frac12 M_{k,j}}.$ In each $B_{l_j^k}(x_j^k)$ we define the dilated function
  \begin{equation}
  \label{2.1}
  v_{i,k}=u_{i,k}(\varepsilon_{k,j}y+x_j^k)+2\log\varepsilon_{k,j}, \qquad i=1,2.
  \end{equation}
  Then we have the following possibilities:
  \begin{itemize}
    \item[(a)] $v_{1,k}$ converges to a solution of $\Delta v+\rho_1e^v=0$ and $v_{2,k}\rightarrow-\infty$ over compact subsets of $\mathbb{R}^2$, where $\rho_1=\lim_{k\rightarrow+\infty}\rho_{1,k}$  or
    \item[(b)] $v_{2,k}$ converges to a solution of $\Delta v+a^2\rho_2e^v=0$ and $v_{1,k}\rightarrow-\infty$ over compact subsets of $\mathbb{R}^2$, where $\rho_2=\lim_{k\rightarrow+\infty}\rho_{2,k}$  or
    \item[(c)] $v_{1,k},v_{2,k}$ converges to $v_1$ and $v_2$ respectively, where $v_1,v_2$ satisfies $\Delta v_1+\rho_1e^{v_1}+a\rho_2e^{v_2}=0$ and $v_2=av_1+c$ with some constant $c$.
  \end{itemize}
  \item There exits a constant $C_1>0$ independent of $k$ such that
  $$M_k(x)+2\log\mathrm{dist}(x,\Sigma_k)\leq C_1,~\forall x\in B_1.$$
\end{enumerate}
\end{proposition}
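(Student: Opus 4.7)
The plan is to carry out the classical bubble-selection process introduced in \cite{lin-wei-zh} and adapted to the sinh-Gordon case in \cite{jwy}, exploiting the algebraic identity built into the normalization \eqref{norm}: by construction $u_{2,k}\equiv a\,u_{1,k}+C_k$ pointwise on $M$, for some constant $C_k$ depending only on $k$. This \emph{exact} linear dependence of the two components is what allows the `fully blown up' alternative (c) to arise and is the structural reason why the same-sign case differs from the sinh-Gordon one treated in \cite{jwy}.

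First I would take $x_1^k$ to be a maximum point of $M_k=\max\{u_{1,k},u_{2,k}\}$ on $\ov B_1$, set $\e_{k,1}=e^{-M_{k,1}/2}$, and introduce the dilations $v_{i,k}$ as in \eqref{2.1}. A direct computation from \eqref{seq} shows that $v_{1,k}$ solves
$$-\D v_{1,k}=\rh_{1,k}h_1(\e_{k,1}y+x_1^k)\,e^{v_{1,k}}+a\rh_{2,k}h_2(\e_{k,1}y+x_1^k)\,e^{v_{2,k}}\quad\mbox{in } B_{1/\e_{k,1}},$$
while the linear relation above rescales to $v_{2,k}\equiv a v_{1,k}+c_k$ with $c_k=C_k+2(1-a)\log\e_{k,1}$. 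By construction $v_{i,k}\leq 0$ with equality at $y=0$ for at least one $i$, so three mutually exclusive scenarios arise after passing to a subsequence, determined by the behaviour of $c_k$: if $c_k\to-\infty$, then necessarily $v_{1,k}(0)=0$ and $v_{2,k}\to-\infty$ on compacta, yielding case (a); if $c_k\to+\infty$, then $v_{2,k}(0)=0$, $v_{1,k}\to-\infty$ locally, yielding case (b); if $c_k$ stays bounded, both components survive in the limit and the relation $v_2=av_1+c$ persists, yielding case (c). In each case elliptic regularity combined with the mass bound \eqref{a3} gives local $C^2_{loc}$ convergence to the claimed limiting equation on $\R^2$.

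I would then define $l_1^k$ as the largest radius $l$ for which property (1) and the convergence on $B_{l/\e_{k,1}}(0)$ hold; by construction $l_1^k/\e_{k,1}\to+\infty$, which is (2). If $M_k(x)+2\log\mathrm{dist}(x,\{x_1^k\})$ stays bounded on $B_1$ we stop; otherwise I select $x_2^k$ as a maximum point of that quantity, observe that this choice automatically forces $|x_1^k-x_2^k|/\e_{k,j}\to+\infty$ for $j=1,2$ so that the rescaled windows separate, and iterate. The process terminates in finitely many steps because each bubble carries at least a fixed positive amount of mass in one of the integrals $\int \rh_{i,k}h_i\,e^{u_{i,k}}$ (the limit profile being a nontrivial finite-mass Liouville-type solution), and these contributions add up thanks to the separation; the mass bound \eqref{a3} therefore caps the number $m$. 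Estimate (4) then follows by contradiction: any sequence $y_k\in B_1$ violating it would itself qualify as the next bubble center, contradicting termination. I expect the main obstacle to lie in case (c), where one must preserve the exact relation $v_{2,k}=av_{1,k}+c_k$ in the limit and check that the resulting $v_1$ has finite total mass, so that the classification of entire finite-mass solutions of $-\D v=e^v+a e^{av+c}$ from \cite{tar2} applies and each bubble contributes a controlled amount of mass.
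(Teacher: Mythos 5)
Your proposal follows essentially the same route as the paper's proof: rescaling at maximum points of $M_k$, exploiting the exact relation $u_{2,k}=a\,u_{1,k}+C_k$ (your trichotomy in terms of $c_k$ is just a reformulation of the paper's comparison of $u_{1,k}(x_1^k)$ with $u_{2,k}(x_1^k)$), invoking the classification of finite-mass entire solutions of $-\Delta v=\rho_1 e^{v}+\rho_2 a_1 e^{a v}$ (Tarantello, Poliakovsky--Tarantello) in the fully blown-up case, and then iterating the Lin--Wei--Zhang selection with termination forced by the mass bound \eqref{a3}, which yields (4). The only ingredient the paper makes explicit that you leave implicit is that the admissible range $\eta\in\bigl(\max\{4,\tfrac4a-4\},\tfrac4a\bigr)$ gives $a\eta>2$, which is what guarantees $v_{i,k}(y)+2\log|y|\leq C$ on expanding balls for \emph{both} components and hence that radii $l_j^k$ with $e^{\frac12 M_{k,j}}l_j^k\to\infty$ and the local fast-decay bound can indeed be chosen.
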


\begin{proof}
The proof is in the same spirit of the one used in \cite[Proposition 2.1]{lin-wei-zh} and \cite[Proposition 2.1]{jwy}. However, compared with the sinh-Gordon case, it is more complicate for the same sign case. Therefore, we would like to give a sketch of the proof and point out the differences.

Let $M_k(x_1^k)=\max_{x\in B_1}M_k(x).$ By assumption we clearly have $x_1^k\rightarrow0.$ Let $v_{1,k},v_{2,k}$ be defined in (\ref{2.1}) with $x_j^k,M_{k,j}$ replaced by $x_1^k,M_k(x_1^k)$ respectively. By the definition of $\varepsilon_{k,1}$ and \eqref{2.1}, we have $v_{i,k}\leq0, i=1,2.$ We note that $v_{i,k}$ satisfies,
\begin{equation}
\label{2.2}
\Delta v_{1,k}+\rho_{1,k}h_1e^{v_{1,k}}+a\rho_{2,k}h_2e^{v_{2,k}}=0, \quad \Delta v_{2,k}+a\rho_{1,k}h_1e^{v_{1,k}}+a^2\rho_{2,k}h_2e^{v_{2,k}}=0.
\end{equation}
Therefore, we can deduce that $|\Delta v_{i,k}|$ is bounded. By standard elliptic estimate, $|v_{i,k}(z)-v_{i,k}(0)|$ is uniformly bounded in any compact subset of $\R^2.$ Since $u_{1,k}$ and $u_{2,k}$ has the following relation
$$au_{1,k}=u_{2,k}+\log\int_M h_2e^{au_k}-a\log\int_M h_1e^{u_k},$$
$u_{1,k},u_{2,k}$ reaches their maximal value at the same point. If $u_{2,k}(x_1^k)\ll M_k(x_1^k)$. Then $u_{1,k}(x_1^k)=M_k(x_1^k)$, $v_{1,k}(0)=0$ and $v_{1,k}$ converges in $C_{loc}^2(\R^2)$ to a function $v_1,$ while the other component $v_{2,k}\rightarrow-\infty$ over compact subsets of $\R^2.$ As a consequence, we have the limit of $v_{1,k}$ satisfies the following equation:
\begin{equation}
\label{2.3}
\Delta v_1+\rho_1 e^{v_1}=0 \qquad \mathrm{in}~\R^2.
\end{equation}
This is one of the alternatives listed in the third conclusion and we can follow the arguments in \cite[Proposition 2.1]{jwy} to find $l_1^k$ such that
\begin{equation}
\label{2.4}
M_k(x)+2\log|x-x_1^k|\leq C,~|x-x_1^k|\leq l_1^k, \qquad e^{\frac12u_{1,k}(x_1^k)}l_1^k\rightarrow\infty.
\end{equation}
The case $u_{1,k}(x_1^k)\ll M_k(x_1^k)$ can be treated similarly and we can also find $l_1^k$ such that similar estimates as in (\ref{2.4}) hold. The left possibility is the situation when $u_{1,k}(x_1^k)$ and $u_{2,k}(x_1^k)$ are comparable. Without loss of generality, suppose $c_k=u_{2,k}(x_1^k)-u_{1,k}(x_1^k)$. According to our assumption, $c_k$ is uniformly bounded and we assume $c_k\rightarrow c_0$ as $k\rightarrow\infty$ by passing to a subsequence if necessary. Then the limit of the corresponding sequence $v_{1,k},v_{2,k}$ satisfy
\begin{equation}
\label{2.5}
\Delta v_1+\rho_1 e^{v_1}+\rho_2ae^{v_2}=0, \qquad v_2=av_1+c_0.
\end{equation}
We can rewrite the equation (\ref{2.5}) as
\begin{equation}
\label{2.6}
\Delta v_1+\rho_1e^{v_1}+\rho_2a_1e^{av_1}=0,~a_1=ae^{c_0}.
\end{equation}
By \cite[Proposition 3.2]{tar2} and \cite[Theorem 3.1]{pt}, we have equation (\ref{2.6}) admits a solution if and only if
\begin{equation}
\label{2.7}
\eta:=\frac{1}{2\pi}\int_{\R^2}\left(\rho_1e^{v_1}+\rho_2a_1e^{av_1}\right)\in\left(\max\left\{4,\frac{4}{a}-4\right\},\frac{4}{a}\right).
\end{equation}
Furthermore, we have the following estimate on the asymptotic behavior on $v_i,$
\begin{equation}
\label{2.8}
|v_1+\eta\log(|y|+1)|\leq C~\mathrm{in}~\R^2,\qquad|v_2+a\eta\log(|y|+1)|\leq C~\mathrm{in}~\R^2.
\end{equation}
From (\ref{2.7}) we can get $a\eta>2$. Then we can take $R_k\rightarrow\infty$ such that
\begin{equation}
\label{2.9}
v_{i,k}(y)+2\log|y|\leq C,~|y|\leq R_k,~i=1,2.
\end{equation}
In other words, we can find $l_1^k\rightarrow0$ such that
\begin{equation*}
M_k(x)+2\log|x-x_1^k|\leq C,~|x-x_1^k|\leq l_1^k,
\end{equation*}
and
\begin{equation*}
e^{\frac12M_k(x_1^k)}l_1^k\rightarrow\infty,~\mathrm{as}~k\rightarrow\infty.
\end{equation*}
Then we consider the function $M_k(x)+2\log|x-x_1^k|.$ If the function $M_k(x)+2\log|x-x_1^k|$ is bounded in whole $B_1$, then the selection process is terminated. Otherwise, we can use a similar argument in \cite[Proposition~2.1]{jwy} or \cite[Proposition 2.1]{lin-wei-zh} to find $x_2^k$ and $l_2^k$, where
\begin{equation*}
M_k(x)+2\log|x-x_2^k|\leq C,~|x-x_2^k|\leq l_2^k \quad \mathrm{and} \quad e^{\frac12M_k(x_2^k)}l_2^k\rightarrow\infty.
\end{equation*}
We can continue such process if the function $M_k(x)+2\log\mathrm{dist}(x,\{x_1^k,x_2^k\})$ is unbounded. Since each bubbling area contributes a positive energy, the process stops after finite steps due to \eqref{a3}. Finally we get
$$\Sigma_k=\{x_1^k,x_2^k,\cdots,x_m^k\}$$
and it holds
\begin{equation}
\label{2.10}
M_k(x)+2\log\mathrm{dist}(x,\Sigma_k)\leq C,
\end{equation}
which concludes the proof.
\end{proof}

\medskip

From the last conclusion of Proposition \ref{pr2.1}, we can get a control on the upper bound of the behavior for the blow-up solutions outside the bubbling disks and the following Harnack type inequality.
\begin{proposition}
\label{pr2.2}
For all $x\in B_1\setminus\Sigma_k,$ there exists a constant $C$ independent of $x$ and $k$ such that
\begin{equation}
\label{2.11}
|u_{i,k}(x_1)-u_{i,k}(x_2)|\leq C,~\forall x_1,x_2\in B(x,d(x,\Sigma_k)/2).
\end{equation}
\end{proposition}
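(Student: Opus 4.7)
The approach is to rescale around $x$ at the natural scale $r_k := d(x, \Sigma_k)$ and reduce the statement to a standard Harnack-type oscillation bound on the rescaled equation. A preliminary observation simplifies matters: by \eqref{norm}, $u_{2,k} - a u_{1,k}$ is a constant, so the oscillation of $u_{2,k}$ on any set is exactly $a$ times that of $u_{1,k}$, and it therefore suffices to control the oscillation of $u_{1,k}$.

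Define $\tilde u_{i,k}(z) := u_{i,k}(x + r_k z) + 2\log r_k$ for $z \in B_1$. Then $\tilde u_{i,k}$ solves on $B_1$ an equation of the same form as \eqref{seq}, with coefficients $h_i(x + r_k z)$ still satisfying \eqref{a2}, and $\mathrm{osc}_{B(x, r_k/2)} u_{i,k} = \mathrm{osc}_{B_{1/2}} \tilde u_{i,k}$. The crucial input is Proposition~\ref{pr2.1}(4): for $z \in B_{3/4}$ we have $d(x + r_k z, \Sigma_k) \geq r_k/4$, hence
\[
\tilde u_{i,k}(z) \leq C_1 - 2\log(r_k/4) + 2\log r_k = C_1 + 2\log 4,
\]
so $\tilde u_{i,k}$ is uniformly bounded above on $B_{3/4}$. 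This at once yields $\|\Delta \tilde u_{i,k}\|_{L^\infty(B_{3/4})} \leq C$, and a change of variables together with \eqref{a3} yields the $L^1$ bound $\int_{B_{3/4}} e^{\tilde u_{i,k}}\,dz \leq C$.

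With these uniform bounds in place, the oscillation estimate on $B_{1/2}$ follows from the standard Brezis--Merle dichotomy for sequences of mean-field-type equations, in the spirit of \cite{lin-wei-zh}. Up to a subsequence, either $\tilde u_{1,k}$ is uniformly bounded in $L^\infty_{loc}(B_{3/4})$, in which case classical $C^{1,\alpha}$ elliptic estimates provide the oscillation bound directly, or $\tilde u_{1,k} \to -\infty$ uniformly on compact subsets of $B_{3/4}$. In the latter regime $e^{\tilde u_{1,k}} \to 0$ uniformly, hence $-\Delta \tilde u_{1,k} \to 0$ uniformly, and normalizing $w_k := \tilde u_{1,k} - \sup_{B_{3/4}} \tilde u_{1,k} \leq 0$ one extracts a harmonic limit $w$ on $B_{3/4}$ with $w \leq 0$ attaining the value $0$ at an interior point; the strong maximum principle forces $w \equiv 0$, so $w_k \to 0$ uniformly on $B_{1/2}$ and in particular $\mathrm{osc}_{B_{1/2}} \tilde u_{1,k} \to 0$. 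Undoing the rescaling then yields the claim.

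The main technical obstacle is precisely the compactness step in the second case of the dichotomy: the bound $w_k \leq 0$ alone does not by itself ensure a uniform lower bound on $w_k$, since harmonic functions bounded above can a priori develop unbounded oscillation when their sup diverges to $-\infty$. The remedy is to argue by contradiction: assuming $\mathrm{osc}_{B_{1/2}} \tilde u_{1,k} \to \infty$ along some subsequence, one places the normalizing supremum over a ball strictly larger than $B_{1/2}$ so that, up to subsequence, the limiting maximum is attained in the interior of the limit domain, whereupon the strong maximum principle applied to the harmonic limit delivers the required contradiction.
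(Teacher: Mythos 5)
Your reduction to $u_{1,k}$ via the constant difference $u_{2,k}-au_{1,k}$, the rescaling at scale $r_k=d(x,\Sigma_k)$, and the uniform upper bound on $B_{3/4}$ coming from conclusion (4) of Proposition \ref{pr2.1} are all fine, and the first branch of your dichotomy (uniform $L^\infty_{loc}$ bounds plus elliptic estimates) is unproblematic. The genuine gap is in the second branch, and it is exactly the obstacle you flag yourself. First, the claim that $-\Delta\tilde u_{1,k}\to0$ there is unjustified: the right-hand side also contains $a\rho_{2,k}h_2e^{\tilde u_{2,k}}$, and since $\tilde u_{2,k}=a\tilde u_{1,k}+c_k+2(1-a)\log r_k$ with a $k$-dependent additive constant, $\tilde u_{2,k}$ need not tend to $-\infty$ when $\tilde u_{1,k}$ does; the right-hand side is only bounded in $L^\infty(B_{3/4})$. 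Second, and more seriously, after subtracting a Newtonian potential of this bounded right-hand side you are left with a harmonic function $h_k$ on $B_{3/4}$ that is merely bounded above, and such functions have no compactness after the normalization $w_k=h_k-\sup h_k$: the model $h_k(z)=k(z_1-1)$ is harmonic, uniformly bounded above, has oscillation $k$ on $B_{1/2}$, tends to $-\infty$ locally uniformly (so it sits squarely in your second alternative), and the normalized sequence $w_k(z)=k(z_1-c)$ converges nowhere. Your proposed remedy, taking the supremum over a slightly larger ball so that the limiting maximum is attained in the interior, cannot work: by the maximum principle the supremum of the (nearly) harmonic part over a closed ball sits on its boundary, so enlarging the ball does not produce an interior maximum, and without two-sided local bounds there is no harmonic limit to which to apply the strong maximum principle. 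In short, the selection-process upper bound plus the $L^1$ bound do not control the ``harmonic direction'' of $u_{1,k}$.

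The information that closes this is the bounded oscillation of $u_{i,k}$ on $\partial B_1$ in \eqref{a3}, which your argument never uses. The proof the paper points to (\cite[Lemma 2.1]{jwy}, \cite[Lemma 2.4]{lin-wei-zh}) represents $u_{1,k}$ on all of $B_1$ via the Green's function: the harmonic part has oscillation bounded by the boundary oscillation, by the maximum principle, while the difference of Green potentials at $x_1,x_2\in B(x,d(x,\Sigma_k)/2)$ is estimated by splitting the integration region into $\{|w-x|\le \frac34 d(x,\Sigma_k)\}$, where conclusion (4) of Proposition \ref{pr2.1} gives the pointwise bound $e^{u_{i,k}(w)}\le C\,d(x,\Sigma_k)^{-2}$ and the logarithmic kernel is integrated directly, and its complement, where $\log\bigl(|x_1-w|/|x_2-w|\bigr)$ is bounded and one only needs the $L^1$ bound from \eqref{a3}. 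Your purely local rescaled argument discards precisely this global boundary information, which is why the second branch of the dichotomy cannot be closed along the route you propose.
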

\begin{proof}
We can modify the arguments in \cite[Lemma2.1]{jwy} or \cite[Lemma2.4]{lin-wei-zh} to get Proposition \ref{pr2.2}.
\end{proof}

\medskip

Let $x_k\in\Sigma_k$ and $\tau_k=\frac12d({x_k,\Sigma_k\setminus\{x_k\}})$, then for $x,y\in B_{\tau_k}(x_k)$ and $|x-x_k|=|y-x_k|$ we can get $u_{i,k}(x)=u_{i,k}(y)+O(1),~i=1,2$ from Proposition \ref{pr2.2}. Hence $u_{i,k}(x)=\overline{u}_{i,x_k}(r)+O(1)$ where $r=|x_k-x|$ and
$$\overline{u}_{i,x_k}(r)=\frac{1}{2\pi}\int_{\partial B_r(x_k)}u_{i,k}.$$
We say  $u_{i,k}$ has fast decay at $x\in B_1$ if
$$u_{i,k}(x)+2\log\mathrm{dist}(x,\Sigma_k)\leq-N_k$$
holds for some $N_k\rightarrow\infty$ for $i=1,2.$ On the other hand, we say $u_{i,k}$ has slow decay at $x$ if
$$u_{i,k}(x)+2\log\mathrm{dist}(x,\Sigma_k)\geq C,$$
for some $C>0$ independent of $k$. It is known from the following lemma that it is possible to choose $r$ such that both $u_{i,k},~i=1,2$ have the fast decay property.
\begin{lem}
\label{le2.1}
For all $\e_k\rightarrow0$ with $\Sigma_k\subset B_{\e_k/2}(0),$ there exists $l_k\rightarrow0$ such that $l_k\geq 2\e_k$ and
\begin{equation*}
\max\{\bar u_{1,k}(l_k),\bar u_{2,k}(l_k)\}+2\log l_k\rightarrow-\infty,
\end{equation*}
where $\bar u_{i,k}(r):=\frac{1}{2\pi r}\int_{\p B_r} u_{i,k}.$
\end{lem}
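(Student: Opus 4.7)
The plan is to exploit the total mass bound $\int_{B_1}\rho_{i,k}h_i e^{u_{i,k}} \leq C$ from \eqref{a3} together with Jensen's inequality on concentric circles, then conclude via a pigeonhole argument in log-radial coordinates.

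First, on each circle $\p B_r$ Jensen's inequality gives
\[
\frac{1}{2\pi}\int_0^{2\pi} e^{u_{i,k}(r,\th)}\,d\th \geq e^{\bar u_{i,k}(r)}.
\]
Multiplying by $r$ and integrating in $r$, and using the lower bound on $h_i$ from \eqref{a2}, yields
\[
2\pi \int_{2\e_k}^{\d} r\,e^{\bar u_{i,k}(r)}\,dr \leq C\int_{B_\d}\rho_{i,k}h_i e^{u_{i,k}}\,dx \leq C'.
\]
Setting $f_{i,k}(r) := \bar u_{i,k}(r) + 2\log r$ and changing variable $t=\log r$, this becomes
\[
\int_{\log(2\e_k)}^{\log \d} e^{f_{i,k}(e^t)}\,dt \leq C'',
\]
which says that $f_{i,k}$ cannot stay bounded from below on an interval of large logarithmic length.

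Second, a Chebyshev-type estimate gives, for every $N>0$,
\[
\bigl|\{t \in [\log(2\e_k),\log \d] : f_{i,k}(e^t) \geq -N\}\bigr| \leq C'' e^N, \qquad i=1,2.
\]
I would then choose $\d_k\to 0$ slowly enough that $L_k := \log(\d_k/(2\e_k)) \to +\infty$ (possible because $\e_k\to 0$) and pick $N_k \to +\infty$ with $2C''e^{N_k} = o(L_k)$, for instance $N_k = \tfrac12 \log L_k$. A union bound over $i=1,2$ then leaves the complement of both ``bad'' sets with positive measure inside $[\log(2\e_k), \log \d_k]$. Any $t_k$ in that complement provides $l_k := e^{t_k}$ satisfying $2\e_k \leq l_k \leq \d_k \to 0$ and $\max\{f_{1,k}(l_k),f_{2,k}(l_k)\} < -N_k \to -\infty$, which is the desired conclusion.

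The main technical point will be choosing the scales $\d_k$ and $N_k$ so that a single radius $l_k$ works for \emph{both} components simultaneously; this is possible precisely because the energy bounds in \eqref{a3} hold for each component independently, so the union-bound cost $2C''e^{N_k}$ can be made negligible compared with $L_k$. A close variant of this argument appears in \cite{jwy} for the sinh-Gordon case; the only adaptation needed here is the use of $h_i\geq 1/C$ from \eqref{a2} to pass from $\int \rho_{i,k}h_i e^{u_{i,k}}$ to a bound on $\int r\,e^{\bar u_{i,k}(r)}\,dr$.
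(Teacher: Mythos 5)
Your proof is correct, and it is worth noting that the paper itself does not prove Lemma \ref{le2.1}: it is stated as known, the argument being deferred to the analogous lemmas in \cite{jwy} and \cite{lin-wei-zh}, where the proof is exactly the pigeonhole-on-logarithmic-scales argument driven by the uniform energy bound in \eqref{a3} that you carry out. The only real difference is how the energy bound is converted into a bound on the logarithmic measure of ``slow-decay'' radii: the cited arguments pass from the circle average $\bar u_{i,k}(r)$ to pointwise values on $\partial B_r$ via the Harnack-type oscillation estimate (Proposition \ref{pr2.2}), whereas you use Jensen's inequality $e^{\bar u_{i,k}(r)}\le \frac{1}{2\pi}\int_0^{2\pi}e^{u_{i,k}(r,\theta)}\,d\theta$, which goes in the needed direction directly and makes the lemma independent of Proposition \ref{pr2.2} — a mild simplification. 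Two small points you should make explicit: passing from $\int_{B_1}\rho_{i,k}h_ie^{u_{i,k}}\le C$ to $\int_{B_1}e^{u_{i,k}}\le C'$ uses $\rho_{i,k}\ge c>0$ (true here since $\rho_i>0$; alternatively the normalization \eqref{norm} gives $\int_M h_ie^{u_{i,k}}=1$ outright) together with $h_i\ge 1/C$ from \eqref{a2}; and $\bar u_{i,k}(r)$ is continuous in $r$, so your Chebyshev sets are measurable. With $L_k=\log\bigl(\delta_k/(2\e_k)\bigr)\to\infty$ and $N_k=\tfrac12\log L_k$, the union-bound cost $2C''e^{N_k}=o(L_k)$ indeed leaves admissible radii, so a single $l_k\in[2\e_k,\delta_k]$ giving simultaneous fast decay of both components exists, as claimed.
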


\medskip

Furthermore, in each bubbling disk $B_{l_j^k}(x_j^k)$ obtained in Proposition \ref{pr2.1}, we can choose some suitable ball such that both $u_{i,k},i=1,2$ have the fast decay property on the boundary of such ball: this fact plays an important role in getting the local Pohozaev identity, which is an essential tool in determining the local mass, see the Remark \ref{re2.1}.

By straightforward computations we have the following Pohozaev identity:
\begin{align}
\label{2.12}
\begin{split}
&\int_{B_r}\left(x\cdot\nabla h_1\rho_{1,k}e^{u_{1,k}}+x\cdot\nabla h_2\rho_{2,k}e^{u_{2,k}}\right)+\int_{B_r}\left(2\rho_{1,k}h_1e^{u_{1,k}}+2\rho_{2,k}h_2e^{u_{2,k}}\right)\\
&=\int_{\partial B_r}r\left(|\partial_{\nu}u_{1,k}|^2-\frac12|\nabla u_{1,k}|^2\right)+\int_{\partial B_r}r\left(\rho_{1,k}h_1e^{u_{1,k}}+\rho_{2,k}h_2e^{u_{2,k}}\right).
\end{split}
\end{align}
It is possible to choose suitable $r=l_k\rightarrow 0$ such that
$$\frac{1}{2\pi}\int_{B_{l_k}}\rho_{i,k}h_ie^{u_{i,k}}=\sigma_i+o(1),i=1,2$$
and both $u_{i,k}$ have fast decay property on $\partial B_{l_k},$ where $\sigma_i$ are introduced in \eqref{localmass}. We point out that the fast decay property is important because it leads to the second term on the right hand side of \eqref{2.12} is $o(1)$. By \eqref{2.12} one can derive that
\begin{equation}
\label{2.13}
4\left(\sigma_1+\sigma_2\right)=(\sigma_1+a\sigma_2)^2.
\end{equation}
For the detailed proof of (\ref{2.13}) we refer the readers to \cite[Proposition 3.1]{jwy}. In addition, we have the following remark which will be used frequently in the forthcoming argument.
\begin{remark}
\label{re2.1}
We have already observed that the fast decay property is crucial in evaluating the Pohozaev identity \eqref{2.12}. Moreover, let $\Sigma_k'\subseteq\Sigma_k$ and assume that
$$\mathrm{dist}\bigr(\Sigma_k',\partial B_{l_k}(p_k)\bigr)=o(1)\,\mathrm{dist}\bigr(\Sigma_k\setminus\Sigma_k',\partial B_{l_k}(p_k)\bigr).$$
If both components $u_{i,k},i=1,2$ have the fast decay property on $\partial B_{l_k}(p_k)$, namely
$$
	\max\{u_{i,k}(x)\} \leq -2\log|x-p_k| -N_k, \quad x\in \partial B_{l_k}(p_k),
$$
for some $N_k\to+\infty$. Then, we can evaluate a local Pohozaev identity as in \eqref{2.12} and get
\begin{align*}
\bigr(\tilde{\sigma}_1^k(l_k)+a\tilde{\sigma}_2^k(l_k)\bigr)^2
=4\left(\tilde{\sigma}_1^k(l_k)+\tilde{\sigma}_2^k(l_k)\right)+o(1),
\end{align*}
where
\begin{align*}
\tilde{\sigma}_i^k(l_k) = \frac{1}{2\pi}\int_{B_{l_k}(p_k)}\rho_{i,k}h_i e^{u_{i,k}}, \qquad i=1,2.
\end{align*}
We note that if $B_{l_k}(p_k) \cap \Sigma_k=\emptyset$ then $\tilde{\sigma}_i^k(l_k)=o(1), i=1,2$ and the above formula clearly holds.
\end{remark}

\medskip

To understand the local energy we have to study the contribution of the energy from each bubbling disk. Without loss of generality we may assume that $0\in\Sigma_k$ (after suitable translation if necessary). Let $\tau_k=\frac12\mathrm{dist}(0,\Sigma_k\setminus\{0\})$. We set
$$\sigma_i^k(r)=\frac{1}{2\pi}\int_{B_r(0)}\rho_{i,k}h_ie^{u_{i,k}}, i=1,2,$$
for $0<r\leq\tau_k$ and $\overline{u}_{i,k}(r)=\frac{1}{2\pi r}\int_{\partial B_r(0)}u_{i,k}.$ By using equation \eqref{seq} we get the following key property:
\begin{align}
\label{deriv}
\begin{split}
&\frac{d}{dr}\overline{u}_{1,k}(r)=\frac{1}{2\pi r} \int_{\p B_r} \frac{\p u_{1,k}}{\p \nu} = \frac{1}{2\pi r} \int_{B_r} \D u_{1,k} = \frac{-\sigma_1^k(r)-a\sigma_2^k(r)}{r},\\
&\frac{d}{dr}\overline{u}_{2,k}(r)=\frac{1}{2\pi r} \int_{\p B_r} \frac{\p u_{2,k}}{\p \nu} = \frac{1}{2\pi r} \int_{B_r} \D u_{2,k} = \frac{-a\sigma_1^k(r)-a^2\sigma_2^k(r)}{r}.
\end{split}
\end{align}
Moreover, from the selection process we have
\begin{equation*}
\max\{u_{1,k}(x),u_{2,k}(x)\}+2\log|x|\leq C, \qquad |x|\leq\tau_k.
\end{equation*}
We recall that if both components have fast decay property on $\p B_r(0)$ for $r\in(0,\tau_k)$, then $\sigma_1^k(r),\sigma_2^k(r)$ satisfy
\begin{align*}
\left(\sigma_1^k(r)+a\sigma_2^k(r)\right)^2=4\left(\sigma_1^k(r)+\sigma^k_2(r)\right) +o(1),
\end{align*}
see Remark \ref{re2.1}. Furthermore, we have the following result on the description of the energy contributed in $B_{\tau_k}(0).$
\begin{proposition}
\label{pr2.3}
Suppose (\ref{seq})-(\ref{a3}) hold for $u_k$ and $h_i$. Then, we have:
\begin{itemize}
\item[(a)] If $a\in\left[\frac 12,1\right)$, then $\left(\s_1^k(\tau_k),\s_2^k(\tau_k)\right)$ is a small perturbation of one of the following types:
$$
	(4,0), \quad \left(0,\dfrac{4}{a^2}\right), \quad (\a,\b),
$$
with $\a, \b$ as in the point \emph{(a)} of the Theorem \ref{th:quantization} and both $u_{i,k}$ are fast decay on $\p B_{\tau_k}$.

\

\item[(b)] If $a\in\left(0,\frac 12\right)$, then $\left(\s_1^k(\tau_k),\s_2^k(\tau_k)\right)$ is either a small perturbation of one of the following types:
$$
	(4,0), \quad \left(0,\dfrac{4}{a^2}\right), \quad \left(4,\dfrac{4}{a^2}-\dfrac{8}{a}\right), \quad (\a,\b),
$$
with $\a, \b$ as in the point \emph{(b)} of the Theorem \ref{th:quantization} and both $u_{i,k}$ are fast decay on $\p B_{\tau_k}$, or $\sigma_1^k(\t_k)=4+o(1)$ and $u_{1,k}$ is fast decay on $\p B_{\tau_k}$.
\end{itemize}
\end{proposition}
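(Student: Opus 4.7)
The plan is to run Proposition \ref{pr2.1} at the point $0 \in \Sigma_k$ to identify the innermost bubble profile at scale $l_1^k \to 0$, and then to integrate the evolution of $(\sigma_1^k(r), \sigma_2^k(r))$ as $r$ grows from $l_1^k$ up to $\tau_k$, using the ODE relations in \eqref{deriv} and the Pohozaev identity of Remark \ref{re2.1}. The three possible initial values at $r = l_1^k$ are perturbations of $(4, 0)$, of $\left(0, \tfrac{4}{a^2}\right)$, or of $(\alpha, \beta)$ satisfying \eqref{2.7}, corresponding to alternatives (a)--(c) of Proposition \ref{pr2.1}. I would then classify the behavior on the annulus $l_1^k < |x| < \tau_k$.

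The key observables are the numerators of
\[
\frac{d}{dr}\bigl(\bar u_{1,k}(r) + 2\log r\bigr) = \frac{2 - \sigma_1^k(r) - a\sigma_2^k(r)}{r}, \qquad \frac{d}{dr}\bigl(\bar u_{2,k}(r) + 2\log r\bigr) = \frac{2 - a\sigma_1^k(r) - a^2\sigma_2^k(r)}{r},
\]
whose signs dictate whether $u_{1,k}$ and $u_{2,k}$ are fast- or slow-decay at the current scale. Combined with the a priori bound $M_k(x) + 2\log|x| \le C$ from Proposition \ref{pr2.1}(4) and the Harnack inequality of Proposition \ref{pr2.2}, the radial averages control $u_{i,k}$ pointwise in the annulus, so the evolution of $(\sigma_1^k, \sigma_2^k)$ is effectively governed by the above system.

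The cases $\left(0, \tfrac{4}{a^2}\right)$ and $(\alpha, \beta)$ close quickly: one checks directly from the initial data that $\sigma_1 + a\sigma_2 > \tfrac{2}{a}$ (using $a < 1$ and, respectively, $\alpha + a\beta > \max\{4, \tfrac{4}{a} - 4\}$), so both components are already fast decay throughout the annulus and Remark \ref{re2.1} freezes the masses. The case $(4, 0)$ with $a \ge \tfrac{1}{2}$ is also immediate because $a \cdot 4 \ge 2$, forcing $u_{2,k}$ into fast decay just outside the bubble core, so the pair remains a small perturbation of $(4,0)$.

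The delicate case is the initial bubble $(4, 0)$ with $a < \tfrac{1}{2}$, where $u_{1,k}$ is fast decay from scale $l_1^k$ onward but $u_{2,k}$ is slow, so $\sigma_2^k(r)$ continues to grow along the trajectory of the second ODE while $\sigma_1^k(r) \equiv 4 + o(1)$ (using that no further bubble centers live inside $B_{\tau_k}$). Either the slow-decay regime for $u_{2,k}$ persists all the way to $\tau_k$, yielding the last case of statement (b); or the threshold $a\sigma_1^k + a^2 \sigma_2^k = 2$ is crossed at some $r_* < \tau_k$, at which point $u_{2,k}$ transitions to fast decay and Remark \ref{re2.1} pins $\sigma_2^k$ at the algebraic value $\tfrac{4}{a^2} - \tfrac{8}{a}$, which is precisely the nontrivial solution of $(\sigma_1 + a\sigma_2)^2 = 4(\sigma_1 + \sigma_2)$ with $\sigma_1 = 4$. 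The main obstacle is exactly this mixed-decay regime: one must rule out intermediate configurations in which the mass stalls away from the admissible list or in which a hidden bubble inside $B_{\tau_k}$ nudges $\sigma_1$ off the value $4$, the latter being excluded by the maximality built into Proposition \ref{pr2.1} and its selection procedure.
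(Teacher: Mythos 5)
Your strategy coincides with the paper's: identify the bubble profile through the selection process of Proposition \ref{pr2.1} (with the fully blown-up profile classified via \cite{tar2} and \cite{pt}, giving the $(\a,\b)$ initial data), then propagate $(\s_1^k(r),\s_2^k(r))$ from the bubble scale up to $\tau_k$ using the radial-average derivatives \eqref{deriv} together with the Pohozaev identity of Remark \ref{re2.1}. Your treatment of the $(0,\frac{4}{a^2})$ and $(\a,\b)$ cases (the check $\s_1+a\s_2>\frac 2a$, i.e. $a\eta>2$) and of the $(4,0)$ case with $a\ge\frac12$ (since $4a\ge 2$) is exactly the paper's argument.

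There are, however, two problems in the delicate case of a $(4,0)$ bubble with $a<\frac12$. First, you assert that $u_{2,k}$ \emph{is} slow-decaying just outside the bubble core; since the sign of $\frac{d}{dr}\bigl(\bar u_{2,k}(r)+2\log r\bigr)$ cannot be determined there, $u_{2,k}$ may perfectly well remain fast-decaying all the way to $\partial B_{\tau_k}$, which yields the $(4,0)$ type in case (b) as well — your dichotomy is not exhaustive (harmless for the statement, since $(4,0)$ is on the list, but it must be part of the case analysis). Second, and more seriously, the step ``the threshold $a\s_1^k+a^2\s_2^k=2$ is crossed at $r_*$, at which point $u_{2,k}$ transitions to fast decay and Remark \ref{re2.1} pins $\s_2$'' does not follow: the sign change of the derivative only says that $\bar u_{2,k}+2\log r$ starts to decrease, whereas \emph{pointwise} fast decay (which is what kills the boundary terms in \eqref{2.12} and legitimizes Remark \ref{re2.1}) is not achieved at $r_*$ and may never be achieved before $\tau_k$. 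The paper closes this by the quantitative argument of \cite[Lemma 4.2]{jwy}: if $u_{2,k}$ has slow decay at a scale $L_k$ with $L_k=o(1)\tau_k/\delta_k$, one constructs $\tilde L_k$ with $\tilde L_k/L_k\to\infty$ and $\tilde L_k=o(1)\tau_k/\delta_k$ on which \emph{both} components are genuinely fast decaying, and only then applies the Pohozaev identity; the slow decay at $L_k$ is what forces a strict mass gain and hence selects the nontrivial root $\frac{4}{a^2}-\frac{8}{a}$ rather than $0$. If instead $L_k=O(1)\tau_k/\delta_k$, no such radius exists and one lands in the undetermined alternative $\s_1^k(\tau_k)=4+o(1)$ with $u_{1,k}$ fast decaying. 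Without this step your argument in the mixed-decay regime is an assertion rather than a proof, even though you correctly identified it as the main obstacle.
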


\medskip

\begin{rem}
The latter result is different in nature from the sinh-Gordon \eqref{eq:sinh} and other opposite-sign cases where the fast decaying component is the one with the bigger local mass. On the contrary, here we can assert both components have fast decay if we exclude the $\sigma_1^k(\t_k)=4+o(1)$ case.
\end{rem}

\begin{proof}[Proof of Proposition \ref{pr2.3}]
The proof is mainly followed by the argument in \cite[Proposition 5.1]{lin-wei-zh} and \cite[Proposition 4.1]{jwy}, with some modifications. As explained in the proof of Proposition~\ref{pr2.1}, both components obtain its maximal at the same point, which is here by assumption $x_1^k=0.$ Let
$$
-2\log\delta_k=\max\{u_{1,k}(x_1^k),u_{2,k}(x_1^k)\}=\max_{x\in B_{\tau_k}(0)}\max\{u_{1,k}(x),u_{2,k}(x)\}.
$$
We set
\begin{equation*}
\label{eqv}
v_i^k(y)=u_{i,k}(\delta_ky)+2\log\delta_k, \qquad |y|\leq\frac{\tau_k}{\delta_k}.
\end{equation*}
If the maximal values of $u_{i,k}(x)$ are comparable: as in Proposition \ref{pr2.1} it holds that $v_i^k$ converges to $v_i$, where $v_i$ verifies the following
\begin{equation}
\label{eqlimit}
\Delta v_1+\rho_1e^{v_1}+a\rho_2e^{v_2}=0, \qquad v_2=av_1+c_0,
\end{equation}
for some constant $c_0$. Then, from \cite[Proposition 3.2]{tar2} and \cite[Theorem 3.1]{pt} we have \eqref{eqlimit} admits a solution if and only if
\begin{equation}
\label{2.14}
\eta=\frac{1}{2\pi}\int_{\R^2}\left(\rho_1e^{v_1}+a\rho_2e^{v_2}\right) \in\left(\max\left\{4,\frac{4}{a}-4\right\},\frac{4}{a}\right).
\end{equation}
In addition, we have the following estimate on the asymptotic behavior on $v_i,$
\begin{align}
\label{2.15}
|v_1+\eta\log(|y|+1)|\leq C~\mathrm{in}~\R^2,\qquad|v_2+a\eta\log(|y|+1)|\leq C~\mathrm{in}~\R^2.
\end{align}
From (\ref{2.14}) we have $a\eta>2$. As a consequence, we can choose $R_k\rightarrow\infty$ (we assume that $R_k=o(1)\tau_k/\delta_k$) such that
\begin{equation}
\label{2.16}
\frac{1}{2\pi}\int_{B_{R_k}}\rho_{i,k}h_i(\delta_ky)e^{v_i^k}=\frac{1}{2\pi}\int_{\R^2}\rho_ie^{v_i}+o(1).
\end{equation}
For $r\geq R_k$ we point out that
\begin{equation*}
\sigma_i^k(\delta_kr)=\frac{1}{2\pi}\int_{B_r}\rho_{i,k}h_i(\delta_ky)\,e^{v_i^k}.
\end{equation*}
Then we get $\sigma_i^k(\delta_kR_k)=\frac{1}{2\pi}\int_{\R^2}\rho_ie^{v_i}+o(1)$. Now, we consider the behavior of the solutions from $B_{\delta_kR_k}$ to $B_{\tau_k}$. First we note that on $\p B_{\delta_kR_k}$ by estimate of the local energies in \eqref{2.14} and by \eqref{deriv} we get
\begin{align*}
\frac{d}{dr}(\bar u_{i,k}(r)+2\log r)<0.
\end{align*}
The latter property together with (\ref{2.16}) implies that both $u_{i,k}$ are fast decay from $B_{\delta_kR_k}$ to $B_{\tau_k}$. Therefore, following a similar argument in \cite[Lemma 4.1]{jwy}, we conclude that the energy of each component only changes by o(1) and both components remains fast decay on $\p B_{\tau_k}.$ This gives raise to the $(\a,\b)$ type in the points (a), (b).

\medskip

If the maximal values of $u_{i,k}$ are not comparable, then we have two cases: $u_{1,k}(x_1^k)\ll u_{2,k}(x_1^k)$ or $u_{1,k}(x_1^k)\gg u_{2,k}(x_1^k)$. When the former case happens, we shall get $v_{1,k}\rightarrow-\infty$ uniformly in any compact subsets of $\R^2$, while $v_{2,k}$ converges to a solution which satisfies
\begin{equation}
\label{2.17}
\Delta v+a^2\rho_2e^{v}=0.
\end{equation}
Then by the quantization of the limit equation (\ref{2.17}), we can choose $R_k\rightarrow\infty$ such that
\begin{equation}
\label{2.18}
\frac{1}{2\pi}\int_{B_{R_k}}\rho_{1,k}h_1(\delta_ky)e^{v_1^k(y)}=o(1),
 \quad \frac{1}{2\pi}\int_{B_{R_k}}\rho_{2,k}h_2(\delta_ky)e^{v_2^k(y)}=\frac{4}{a^2}+o(1).
\end{equation}
For $r\geq R_k$ we clearly have
\begin{equation*}
\sigma_i^k(\delta_kr)=\frac{1}{2\pi}\int_{B_r}\rho_{i,k}h_i(\delta_ky)e^{v_i^k(y)}.
\end{equation*}
Then we have $\s_1^k(\delta_kR_k)=o(1)$ and $\s_2^k(\delta_kR_k)=\dfrac{4}{a^2}+o(1).$ By using the equation \eqref{deriv}, we have
\begin{align*}
\frac{d}{dr}(\bar u_{i,k}(r)+2\log r)\leq 0,
\end{align*}
as before, we could get both components are fast decay up to $\p B_{\tau_k}$ and each component only changes by $o(1).$ Hence, we end up with the $\left(0,\dfrac{4}{a^2}\right)$ type in the points (a), (b).

\medskip

The left case is $u_{1,k}\gg u_{2,k}$. Similarly, we could find $R_k\rightarrow\infty$ such that
\begin{equation}
\label{2.19}
\frac{1}{2\pi}\int_{B_{R_k}}\rho_{1,k}h_1(\delta_ky)e^{v_1^k(y)}=4+o(1),
 \quad \frac{1}{2\pi}\int_{B_{R_k}}\rho_{2,k}h_2(\delta_ky)e^{v_2^k(y)}=o(1).
\end{equation}
Then we have $\s_1^k(\delta_kR_k)=4+o(1)$ and $\s_2^k(\delta_kR_k)=o(1).$ If $a\geq \frac 12$ then $\frac{d}{dr}(\bar u_{2,k}(r)+2\log r) \leq 0$ and as before each component only changes by $o(1)$: we get the $(4,0)$ type in the point (a). On the other hand, for $a<\frac12$ we can not determine the sign of $\frac{d}{dr}(\bar u_{2,k}(r)+2\log r)$. Then we get the following alternatives: for $r\geq R_k$ either both $v_{i,k}$ have fast decay property up to $\partial B_{\tau_k}$ (and we get $\left(\sigma_1^k(\t_k),\sigma_2^k(\t_k)\right)=(4,0)+o(1)$) or there exists $L_k\in (R_k,\tau_k/\delta_k)$ such that $v_{2,k}$ has the slow decay
\begin{equation*}
v_2^k(y)\geq-2\log L_k-C, \quad |y|=L_k,
\end{equation*}
for some $C>0$, while
$$v_1^k(y)\leq-2\log|y|-N_k, \qquad |R_k|\leq |y|\leq L_k,$$
for some $N_k\rightarrow+\infty$. If the latter happens and $L_k=o(1)\tau_k/\delta_k,$ we can repeat the arguments in \cite[Lemma 4.2]{jwy} to find $\tilde L_k$ such that $\tilde L_k/L_k\rightarrow\infty$ and $\tilde L_k=o(1)\tau_k/\delta_k$ and both components have fast decay:
$$v_i^k(y)\leq -\log|y|-N_k, \qquad |y|=\tilde L_k,~i=1,2,$$
for some $N_k\rightarrow\infty.$ Moreover, by the Pohozaev identity, see Remark \ref{re2.1},  $\sigma_1^k(\delta_k\tilde L_k)=4+o(1)$ and $\sigma_2^k(\delta_k\tilde L_k)=\dfrac{4}{a^2}-\dfrac{8}{a}$. Then as $r$ grows from $\delta_k\tilde L_k$ to $\tau_k$, we have both $u_{i,k}$ satisfy $\frac{d}{dr}(\bar u_{i,k}(r)+2\log r)\leq 0$. Hence, both components have fast decay up to $\partial B_{\tau_k}$ and the energies are $\sigma_i^k(\delta_k\tilde L_k)+o(1)$, respectively. Hence, we end up with the $\left(4,\dfrac{4}{a^2}-\dfrac{8}{a}\right)$ type in the point (b).

If instead $L_k=O(1)\tau_k/\delta_k$, by Proposition \ref{pr2.2} we directly conclude that the first component has fast decay while the second one has slow decay. Moreover, $\sigma_1^k(\t_k)=4+o(1)$, while $\sigma_2^k(\t_k)$ can not be determined at this point. Thus, we finish the proof.
\end{proof}

\medskip

After analyzing the behavior of the bubbling solution $u_{i,k}$ in each disk, we turn to consider the combination of the bubbling disks in a group. The concept of group for this kind of problems was first introduced in \cite{lin-wei-zh}. Roughly speaking, the groups are made of points in $\Sigma_k$ which are relatively close to each other but relatively far away from the other points in $\Sigma_k$.

\medskip

\noindent {\bf Definition.} Let $G=\{p_1^k,\cdots,p_q^k\}$ be a subset of $\Sigma_k$ with more than one point in it. $G$ is called a group if
\begin{enumerate}
  \item dist$(p_i^k,p_j^k)\sim $ dist$(p_s^k,p_t^k)$,
 where $p_i^k,p_j^k,p_s^k,p_t^k$ are any points in $G$ such that $p_i^k\neq p_j^k$ and $p_t^k\neq p_s^k.$ \vspace{0.3cm}

  \item $\dfrac{\mbox{dist}(p_i^k,p_j^k)}{\mbox{dist}(p_i^k,p_k)}\rightarrow0$,
  for any $p_k\in\Sigma_k\setminus G$ and for all $p_i^k,p_j^k\in G$ with $p_i^k\neq p_j^k.$
\end{enumerate}

\medskip

We note that from Proposition \ref{pr2.2} if both components $u_{i,k}$ have fast decay around one of the disks in a group, they are forced to have fast decay around all the disks in this group. More precisely, suppose $B_{\tau_1^k}(x_1^k),\cdots,B_{\tau_m^k}(x_m^k)$ are all the bubbling disks in some group, where $\tau_j^k=\frac12\mathrm{dist}(x_j^k,\Sigma_k\setminus\{x_j^k\})$. By the definition of group, all the $\tau_l^k,~l=1,\cdots,m$ are comparable. Suppose both $u_{i,k}$ have fast decay: then we can find $N_k\rightarrow\infty$ such that all the disks in this group are contained in $B_{N_k\tau_1^k}(0)$ and
\begin{align} \label{tot}
\sigma_i^k\left(N_k\tau_1^k\right)=\sum_{j=1}^m\sigma_i^k\left(B_{\tau_{j}^k}(x_j^k)\right)+o(1), \qquad i=1,2,
\end{align}
where
$$\sigma_i^k\left(B_{\tau_{j}^k}(x_j^k)\right)=\frac{1}{2\pi}\int_{B_{\tau_{j}^k}(x_j^k)}\rho_{i,k}h_ie^{u_{i,k}}, \qquad i=1,2.$$
Roughly speaking, the energy contribution comes just from the energy in each bubbling disk.

From Proposition \ref{pr2.3}, if there is a bubbling disk such that
\begin{align}
\label{2.20}
\begin{split}
\left(\sigma_1^k\left(B_{\tau_{j}^k}(x_j^k)\right),\sigma_2^k\left(B_{\tau_{j}^k}(x_j^k)\right)\right)
\in&\Biggr\{  \left(0,\frac{4}{a^2}\right)+o(1),\left(4,\frac{4}{a^2}-\frac{8}{a}\right)+o(1),\Biggr. \\
&\Biggr.(\alpha,\beta)+o(1)\Biggr\},
\end{split}
\end{align}
where $(\alpha,\beta)$ satisfies the relation listed in Proposition \ref{pr2.3}, we can conclude both components $u_{i,k},i=1,2$ have fast decay. Then we claim there is at most one bubbling disk in this group. If not, suppose there are more than one bubbling disk: $B_{\tau_1^k}(x_1^k),\cdots,B_{\tau_m^k}(x_m^k),~m\geq2$. From Remark \ref{re2.1} $\sigma_i^k\left(B_{\tau_{j}^k}(x_j^k)\right)$ satisfies
\begin{align}
\label{2.21}
\left(\sigma_1^k\left(B_{\tau_{j}^k}(x_j^k)\right)+a\sigma_2^k\left(B_{\tau_{j}^k}(x_j^k)\right)\right)^2=
4\left(\sigma_1^k\left(B_{\tau_{j}^k}(x_j^k)\right)+\sigma_2^k\left(B_{\tau_{j}^k}(x_j^k)\right)\right) + o(1),
\end{align}
for $j=1,\cdots,m$ and, by \eqref{tot}
\begin{align}
\label{2.22}
\begin{split}
\left(\sum_{j=1}^m\sigma_1^k\left(B_{\tau_{j}^k}(x_j^k)\right)
+a\sum_{j=1}^m\sigma_2^k\left(B_{\tau_{j}^k}(x_j^k)\right)\right)^2 =~&
4\left(\sum_{j=1}^m\sigma_1^k\left(B_{\tau_{j}^k}(x_j^k)\right)\right. \\
&\left. +\sum_{j=1}^m\sigma_2^k\left(B_{\tau_{j}^k}(x_j^k)\right)\right) + o(1).
\end{split}
\end{align}
However, when $m\geq2$, it is impossible for \eqref{2.21} and \eqref{2.22} to hold simultaneously. Thus, we prove the claim.

\medskip

Therefore, if there are more than one bubbling disk in some group, then all of the local energies $\sigma_1^k\left(B_{\tau_{j}^k}(x_j^k)\right)$ must be a small perturbation of $4$ with $u_{1,k}$ fast decay and $u_{2,k}$ slow decay. Then we can choose $N_k\rightarrow\infty$ such that both $u_{i,k}$ are fast decay on $\p B_{N_k\tau_k}(0)$ and
\begin{align}
\label{2.23}
\sigma_1^k\left(B_{N\tau_k}(0)\right)=\sum_{j=1}^m\sigma_1^k\left(B_{\tau_j^k}(x_j^k)\right)+o(1)=4m+o(1),
\end{align}
with $m>1$. Furthermore, $\sigma_1^k\left(B_{N\tau_k}(0)\right),\sigma_2^k\left(B_{N\tau_k}(0)\right)$ satisfy
\begin{align}
\label{2.24}
\left(\sigma_1^k\left(B_{N\tau_k}(0)\right)+a\sigma_2^k\left(B_{N\tau_k}(0)\right)\right)^2=
4\left(\sigma_1^k\left(B_{N\tau_k}(0)\right)+\sigma_2^k\left(B_{N\tau_k}(0)\right)\right).
\end{align}
If the quadratic equation $(4m+ax)^2=4(4m+x)$ admits solution $\gamma_{m}$, i.e.,
\begin{align}
\label{2.25}
(4m+a\gamma_m)^2=4(4m+\gamma_m),
\end{align}
then we can get
\begin{align}
\label{2.26}
\left(\sigma_1^k\left(B_{N\tau_k}(0)\right),\sigma_2^k\left(B_{N\tau_k}(0)\right)\right)=(4m,\gamma_m)+o(1).
\end{align}
Since $m>1$ we observe that for $a\geq \frac 12$ the equation \eqref{2.25} does not admit solutions and hence this type of local mass is not present for this range of the parameter $a$. Furthermore, from \eqref{deriv} we can see that $u_{1,k}$ is still the fast decay component in $B_{N\tau_k}\setminus B_{\tau_k}.$ In conclusion, if there is only one bubbling disk, then the local energy of the area covered by this group is one of the following
\begin{align*}
\left\{(4,0),\left(0,\frac{4}{a^2}\right),\left(4,\frac{4}{a^2}-\frac{8}{a}\right),(\alpha,\beta)\right\},
\end{align*}
where $(\alpha,\beta)$ satisfy the relation listed in Proposition \ref{pr2.3} (the third type is not present for $a\geq \frac 12$), or there are multiple bubbling disks in this group, and the associated local energy is $(4m,\gamma_m)$ where $\gamma_m$ satisfies $(4m+a\gamma_m)^2=4(4m+\gamma_m)$, $m>1$. We point out that except for the case $(\a,\b)$ the local energy of the first component is always a perturbation of a multiple of $4$ and $u_{1,k}$ is fast decaying. Then, we start to combine the groups, which is very similar as the combination of the bubbling disks. We continue to include the groups which are far away from $0$: from the selection process we have only finite bubbling disks and as a result the combination procedure will terminate in finite steps. Then, we can take $s_k\rightarrow0$ with $\Sigma_k\subset B_{s_k}(0)$ such that both components $u_{i,k}$ are fast decay on $\p B_{s_k}(0)$. Therefore, we have $\sigma_1^k(s_1^k),\sigma_2^k(s_2^k)$ is a small perturbation of one of the following types
\begin{align*}
\left\{(4,0), \left(0,\frac{4}{a^2}\right), \left(4,\frac{4}{a^2}-\frac{8}{a}\right), (\alpha,\beta), (4m,\gamma_m)\right\},
\end{align*}
where $\gamma_m$ satisfies (\ref{2.25}), $m>1$, and $(\alpha,\beta)$ satisfy the condition stated in Proposition~\ref{pr2.3} (the third and the last types are not present for $a\geq \frac 12$). On the other hand, we have
$$\sigma_i=\lim_{k\rightarrow\infty}\sigma_i^k(s_k),\qquad i=1,2.$$
It follows that $\sigma_1,\sigma_2$ satisfy the quantization property of Theorem \ref{th:quantization} and we finish the proof.

\

We end this section by giving the proof of the compactness property of Theorem~\ref{th:comp}.

\medskip

\noindent {\em Proof of Theorem \ref{th:comp}.} We prove this theorem by contradiction. Suppose the conclusion is wrong. Then $S\neq\emptyset.$ In order to derive a contradiction, we need an improved version of the second conclusion in Theorem \ref{th:blow-up}. We claim that if $S_1\neq\emptyset$, then
\begin{align}
\label{2.27}
\rho_{1,k}h_1e^{u_{1,k}}\rightharpoonup \sum_{p\in S_1}m_1(p)\delta_p,
\end{align}
in other words, we claim $r_1\equiv 0$ if $S_1\neq\emptyset$. It is equivalent to $\int_Mh_1e^{u_k}\rightarrow\infty$. If the claim is not true, then we have $\int_Mh_1e^{u_k}$ is uniformly bounded and so is $\int_Mh_2e^{au_k}$ by Holder's inequality. Letting
$$C_{1k}^{-1}=\int_Mh_1e^{u_k}, \qquad C_{2k}^{-1}=\int_Mh_2e^{au_k},$$
we can find two positive constants $C_1$ and $C_2$ such that $C_1\leq C_{ik}\leq C_2.$
Then, we can write \eqref{eq} as (recall $|M|=1$)
\begin{align*}
\Delta u_k+C_{1k}\rho_{1,k} h_1 e^{u_k}+aC_{2k}\rho_{2,k}h_2e^{au_k}-\rho_{1,k}-a\rho_{2,k}=0.
\end{align*}
We decompose $u_k=v_{1k}+v_{2k}$, where $v_{ik}$ satisfy
\begin{align*}
\left\{
\begin{array}{ll}
\Delta v_{1k}+C_{1k}\rho_{1,k}h_1e^{v_{2k}}e^{v_{1k}}=0 & \mathrm{in}~B_r(p), \\
v_{1k}=0 & \mathrm{on}~\partial B_r(p),
\end{array}
\right.
\end{align*}
and
\begin{align*}
\left\{
\begin{array}{ll}
\Delta v_{2k}+aC_{2k}\rho_{2,k}h_2e^{au_k}-\rho_{1,k}-a\rho_{2,k}=0 & \mathrm{in}~B_r(p),\\
v_{1k}=u_k &\mathrm{on}~\partial B_r(p),
\end{array}
\right.
\end{align*}
where $r$ is chosen such that $B_r(p)\cap S=\{p\}$. Since $\int_{B_r(p)}e^{u_k}$ is uniformly bounded we have $e^{au_k}\in L^{\frac{1}{a}}(B_r(p)).$ Using Green's representation formula we can conclude $u_k$ is uniformly bounded in $M\setminus S$. By standard elliptic estimate, we can get $v_{2k}$ is uniformly bounded in $B_r(p)$. Then $v_{1k}$ satisfies
\begin{equation}
\label{2.28}
\left\{
\begin{array}{ll}
\Delta v_{1k}+h_ke^{v_{1k}}=0 & \mathrm{in}~B_r(p),\\
v_{1k}=0 &\mathrm{on}~\partial B_r(p),
\end{array}
\right.
\end{equation}
where $h_k=C_{1k}\rho_{1,k}h_1e^{v_{2k}}$ is uniformly bounded. Since $u_k$ blows-up at $p$ we apply Brezis-Merle's result \cite{bm} to deduce $C_{1k}\rho_{1,k}h_1e^{u_k}=h_k e^{v_{1k}}\rightharpoonup 8\pi\delta_p$ in $B_r(p)$. This contradicts to the assumption $\int_Mh_1e^{u_k}$ is bounded. Hence, we finish the proof of the claim.

\medskip

It follows that if $S_1\neq\emptyset$ we get $\rho_{1,k}h_1e^{u_{1,k}}\rightharpoonup\sum_{p\in S_1}m_1(p)\delta_p$. Since we are assuming $a<\frac 12$ and $\rho_2<\dfrac{8\pi}{a^2}-\dfrac{16\pi}{a}$, we can not have a local mass of the type $(\a,\b)$ in the Theorem \ref{th:blow-up}. Therefore, the local mass of $u_{1,k}$ is a multiple of $8\pi$ around the blow-up point, i.e. $m_1(p)$ is multiple of $8\pi$ for every $p\in S_1$. Then, we get $\rho_1\in 8\pi\mathbb{N}$. Contradiction arises. Therefore, $S_1=\emptyset$ and $S_2\neq\emptyset.$ Since the blow-up mass $(8\pi,0)$ was already considered in the latter argument, from Theorem~\ref{th:blow-up} we have $m_2(p)=\dfrac{8\pi}{a^2}$ for any blow-up point $p\in S_2$. This leads to $\rho_2>\dfrac{8\pi}{a^2}$ and it contradicts the assumption $\rho_2<\dfrac{8\pi}{a^2}-\dfrac{16\pi}{a}.$ Thus, $S_2$ is also empty. Therefore, we prove the theorem.
\begin{flushright}
$\square$
\end{flushright}

\

\section{Variational analysis} \label{sec:var}

In this section we will introduce the variational argument needed for the existence result of Theorem~\ref{th:exist}. We will start by discussing about the Moser-Trudinger inequality associated to \eqref{eq} and its improved version. Next we construct a min-max scheme based on the barycenters of $M$, see \eqref{M_k}, which yields the existence of solutions to \eqref{eq}. The latter argument is quite standard now, see for example \cite{mal-ndi,zhou}, so we shall only give the crucial steps.

\subsection{Moser-Trudinger inequality}

As already pointed out in the Introduction, a sharp Moser-Trudinger inequality in this setting was obtained in \cite{ors}. More precisely, let
\begin{equation} \label{func:prob}
	J_{\mathcal P}(u) = \frac{1}{2}\int_M |\nabla u|^2 \,dV_g - \rho \int_I  \left( \log\int_M  e^{\a (u-\ov u)} \,dV_g  \right) \, \mathcal P(d\a)
\end{equation}
be the energy functional associated to the general problem \eqref{eq:prob}, $J_{\mathcal P}$ is bounded from below if and only if
\begin{equation} \label{best}
	\rho \leq 8\pi \inf \left\{ \frac{\mathcal P(K_{\pm})}{\left( \int_{K_{\pm}} \a \,\mathcal P(d\a) \right)^2} \,:\, K_{\pm}\subset I_{\pm}\cap supp\,\mathcal P \right\},
\end{equation}
where $K$ is a Borel set and $I_+=[0,1]$ and $I_-=[-1,0)$. In the case $\mathcal P(d\a)= \wtilde{\mathcal P}(d\a) = \t \d_1(d\a)+(1-\t) \d_{a}(d\a)$, with $a\in(0,1)$ and $\t_1\in[0,1]$, the functional \eqref{func:prob} is reduced to
\begin{align*}
	J_{\wtilde{\mathcal P}}(u) =~&\frac{1}{2}\int_M |\nabla u|^2 \,dV_g - \rho \t  \left( \log\int_M h_1  e^u \,dV_g  - \int_M u \,dV_g \right) \\
	&-  \rho (1-\t) \left( \log\int_M h_2  e^{a u} \,dV_g  - \int_M  a u \,dV_g \right),
\end{align*}
while the sharp inequality \eqref{best} is given by
\begin{equation} \label{best2}
	\rho \leq 8\pi\min \left\{ \frac{1}{\t}, \frac{1}{a^2(1-\t)}, \frac{1}{(\t+a(1-\t))^2} \right\},
\end{equation}
by taking $K=\{1\}, \, K=\{a\}$ and $K=\{a, 1\}$, respectively. In the notation of the functional $J_\rho$ in \eqref{func}, where $\rho=\rho_1+\rho_2$ and $\t= \frac{\rho_1}{\rho_1+\rho_2}$, $1-\t= \frac{\rho_2}{\rho_1+\rho_2}$, we get equivalently
$$
		1 \leq 8\pi\min \left\{ \frac{1}{\rho_1}, \frac{1}{a^2\rho_2}, \frac{\rho_1+\rho_2}{(\rho_1+a\rho_2)^2} \right\},
$$
which implies that all the following conditions have to be satisfied:
\begin{equation} \label{best3}
	\rho_1 \leq 8\pi, \qquad \rho_2 \leq \frac{8\pi}{a^2}, \qquad (\rho_1+a\rho_2)^2 \leq 8\pi (\rho_1+\rho_2).
\end{equation}
Some comments are needed here: the first value in \eqref{best3} is related to the standard Moser-Trudinger inequality \eqref{ineq}, while the first two values are the sharp inequality's constants in the opposite-sign case, see for example \eqref{eq:sinh}. On the other hand, in the same-sign case the sharp inequality changes due to the last value in \eqref{best3}. By means of the Theorem \ref{th:blow-up} we can interpret the latter quantity in terms of the local blow-up masses in \eqref{mass1}, \eqref{mass2}. Indeed, the first two values in \eqref{best3} correspond to the first two masses in \eqref{mass1}, \eqref{mass2} while the last one in \eqref{best3} is related to the fully blow-up mass, see also Remark \ref{rem:blow-up}, which is the third (resp. fourth) possibility in \eqref{mass1} (resp. \eqref{mass2}). The third local mass in \eqref{mass2} is nothing but the limit case of the fully blow-up situation and represents the infimum of the latter fully blow-up masses. On the other hand, the last type of local mass in \eqref{mass2}, which we denote here by $(\s_1,\s_2)$, does not give any further restriction on the parameters in the Moser-Trudinger inequality since $\s_1 > 8\pi$.

\medskip

Let us see now what kind of Moser-Trudinger inequality we will need in the sequel. Recall that in the existence result of Theorem \ref{th:exist} we assume $a<\frac 12$ and $\rho_2<\dfrac{8\pi}{a^2}-\dfrac{16\pi}{a}$. Under the latter assumptions we can derive by straightforward calculations that the sharp condition in \eqref{best3} is given by $\rho_1 \leq 8\pi$ (namely, the last bound in \eqref{best3} does not give any restriction on $\rho_1$). In other words, we deduce the following inequality:
\begin{equation} \label{m-t}
8\pi \log\int_M e^{u-\ov u} \, dV_g + \rho_{2,a} \log\int_M e^{a(u-\ov u)} \, dV_g \leq \frac 12 \int_M |\n u|^2\,dV_g + C_{M,g},
\end{equation}
where $\ov u= \fint_M u\,dV_g$ and we set $\rho_{2,a}=\dfrac{8\pi}{a^2}-\dfrac{16\pi}{a}$. We point out that our assumptions are sharp in the sense that if $\rho_2>\rho_{2,a}$, the latter inequality does not hold true with respect to $(8\pi,\rho_2)$.

Inequality \eqref{m-t} gives boundedness from below and coercivity of the functional $J_\rho$ in \eqref{func} for $\rho_1<8\pi$ and $\rho_2 < \rho_{2,a}$. In the non-coercive regime what we really need is an improved version of it. It is by now well-known how to deduce following type of results, see in particular \cite{zhou} (see also \cite{bjmr, je-ya} for more general results), hence we only sketch the process.
\begin{pro}\label{mt-impr}
Let $\rho_{2,a}$ be defined as above and let $\dis{\d>0}$, $\dis{\th>0}$, $\dis{k\in\N}$, $\dis{\{S_{i}\}_{i=1}^k\subset M}$ be such that $d(S_i,S_j)\geq \d$ for $i\neq j$. Then, for any $\dis{\e>0}$ there exists $\dis{C=C\left(\e,\d,\th,k,M\right)}$ such that if $u\in H^1(M)$ satisfies
\begin{align*}
\int_{S_{i}} e^{u}\,dV_g\ge\th\int_M e^{u}\,dV_g, \qquad \forall i\in\{1,\dots,k\},
\end{align*}
it follows that
$$8k\pi\log\int_M e^{u-\ov{u}}\,dV_g+\rho_{2,a}\log\int_M e^{a(u-\ov u)}\,dV_g\leq \frac{1+\e}{2}\int_M |\n u|^2\,dV_g+C. $$
\end{pro}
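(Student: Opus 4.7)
The plan is to follow the classical Chen--Li localization scheme, using the sharp inequality \eqref{m-t} (which already contains the $\rho_{2,a}$ term) as the local Moser--Trudinger estimate. From the separation $d(S_i,S_j)\geq\delta$ I would fix pairwise disjoint open neighborhoods $\Omega_i\supset S_i$ with $d(S_i,\partial\Omega_i)\geq\delta/4$, and smooth cutoffs $\varphi_i\in C^\infty_c(\Omega_i)$ satisfying $\varphi_i\equiv 1$ on $S_i$ and $|\n\varphi_i|\leq C/\delta$. Setting $u_i:=\varphi_i(u-\ov u)$, these localized functions have pairwise disjoint supports, so standard Leibniz estimates combined with Poincar\'e's inequality yield
\[
\sum_{i=1}^k\int_M|\n u_i|^2\,dV_g\leq(1+\varepsilon/2)\int_M|\n u|^2\,dV_g+C_{\varepsilon,\delta}.
\]

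Next, I would apply \eqref{m-t} to each $u_i$ (centered at its own mean $\ov{u_i}$) and sum over $i$. The concentration hypothesis together with $\varphi_i\equiv 1$ on $S_i$ gives $\int_M e^{u_i-\ov{u_i}}\,dV_g\geq \theta e^{-\ov{u_i}}\int_M e^{u-\ov u}\,dV_g$, hence $\sum_i\log\int_M e^{u_i-\ov{u_i}}\,dV_g\geq k\log\int_M e^{u-\ov u}\,dV_g+O(1)$, where the $\ov{u_i}$ terms are absorbed into $\int|\n u|^2$ via Poincar\'e and Young. This already produces the coefficient $8k\pi$ in front of the first logarithm on the left-hand side.

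To recover the precise coefficient $\rho_{2,a}$ (rather than $k\rho_{2,a}$) in front of $\log\int e^{a(u-\ov u)}\,dV_g$, I would split into cases according to where the $e^{au}$-mass concentrates. If some $S_{j_0}$ carries a definite fraction of $\int_M e^{a(u-\ov u)}\,dV_g$, applying \eqref{m-t} in $\Omega_{j_0}$ and only the standard inequality \eqref{ineq} in the remaining $\Omega_i$ recovers the coefficient $\rho_{2,a}$ exactly once. Otherwise most of the $e^{au}$-mass sits in $M\setminus\bigcup_i\Omega_i$, and H\"older's inequality combined with $\sum_i\int_{S_i}e^u\,dV_g\geq k\theta\int_M e^u\,dV_g$ forces $\log\int_M e^{a(u-\ov u)}\,dV_g\leq a\log\int_M e^{u-\ov u}\,dV_g+C$; the argument is then closed by combining this bound with the classical Chen--Li improvement for $\log\int_M e^{u-\ov u}\,dV_g$ and exploiting the explicit value $\rho_{2,a}=8\pi/a^2-16\pi/a$. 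The main technical obstacle is precisely this final case analysis: since the hypothesis only provides concentration of $e^u$, matching the coefficient $(1+\varepsilon)/2$ for arbitrarily small $\varepsilon>0$ requires careful tracking of the numerology, in the spirit of the arguments in \cite{zhou,bjmr,je-ya}.
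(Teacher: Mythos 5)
Your overall skeleton (localize, use the two--term inequality \eqref{m-t} once and the standard inequality \eqref{ineq} in the remaining regions) is the same as the paper's, and your case A --- some region also captures a definite fraction of $\int_M e^{au}$ --- is exactly what the paper arranges through its covering argument in \eqref{vol}. The genuine gap is case B. The H\"older bound $\log\int_M e^{a(u-\ov u)}\,dV_g\leq a\log\int_M e^{u-\ov u}\,dV_g$ holds always (it uses only $|M|=1$, not the location of the $e^{au}$--mass), but it is far too lossy: inserting it reduces your claim to $(8k\pi+a\rho_{2,a})\log\int_M e^{u-\ov u}\,dV_g\leq\frac{1+\e}{2}\int_M|\n u|^2\,dV_g+C$, and since $a\rho_{2,a}=\frac{8\pi}{a}-16\pi>0$ for $a<\frac12$, the coefficient on the left strictly exceeds the sharp Chen--Li constant $8k\pi$ available under $k$-fold concentration of $e^u$; nothing in the case-B hypothesis (which constrains $e^{au}$, not $e^u$) improves that constant. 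Indeed the reduced inequality fails along the standard test functions $\var_{\l,\s}$ of \eqref{bubble} with equal weights at points $x_i\in S_i$: by \eqref{grad}, \eqref{exp1}, \eqref{aver} one has $\frac12\int_M|\n\var_{\l,\s}|^2\,dV_g=(16k\pi+o_\l(1))\log\l$ and $\log\int_M e^{\var_{\l,\s}-\ov\var_{\l,\s}}\,dV_g=(2+o_\l(1))\log\l$, so for $\e<\frac{a\rho_{2,a}}{8k\pi}$ the deficit grows like $2a\rho_{2,a}\log\l$; moreover, precisely because $a<\frac12$, the mass of $e^{a\var_{\l,\s}}$ spreads at unit scale (only an $O(\d^{2-4a})$ fraction lies in $\bigcup_i\Omega_i$), so for $\d$ small these functions do fall into your case B, while $\log\int_M e^{a(\var_{\l,\s}-\ov\var_{\l,\s})}\,dV_g=O(1)$ shows exactly where H\"older loses $2a\log\l$. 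The correct exit from case B is not to convert the $e^{au}$--term into an $e^u$--term at all: by a covering argument a fixed fraction of $\int_M e^{au}$ always sits in some ball of size comparable to $\d$, which can either be merged into one of the concentration regions (your case A, i.e. the paper's \eqref{vol}) or treated as one extra disjoint region where the standard Moser--Trudinger inequality applied locally to $au$ yields the coefficient $\frac{8\pi}{a^2}>\rho_{2,a}$; either way \eqref{m-t} or its $au$-analogue is used exactly once, and the factor $\frac{1+\e}{2}$ survives because the regions are disjoint.

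A second, more routine gap is your opening gradient estimate. The Leibniz expansion of $\int_M|\n(\varphi_i(u-\ov u))|^2\,dV_g$ produces the term $\int_M(u-\ov u)^2|\n\varphi_i|^2\,dV_g$, which is quadratic in $u$: Poincar\'e only bounds it by a fixed multiple $C_\d\int_M|\n u|^2\,dV_g$, not by $\frac{\e}{2}\int_M|\n u|^2\,dV_g+C_{\e,\d}$, so your first display does not follow from ``Leibniz plus Poincar\'e''. (Only terms that are linear in $u$, such as the averages $\ov{u_i}$, can be absorbed via Young's inequality.) The standard repair --- and the one the paper uses in \eqref{mt1}--\eqref{mt-f3} --- is to decompose $u-\ov u=v+w$ along eigenfunctions of $-\D$, with $v$ spanned by finitely many low modes (so $\|v\|_{L^\infty}$ is controlled linearly by $\|\n u\|_{L^2}$ and absorbed by Young) and $w$ a high-mode remainder satisfying $\int_M w^2\,dV_g\leq\L^{-1}\int_M|\n w|^2\,dV_g$ with $\L$ large, so that the quadratic error becomes $\e\int_M|\n w|^2\,dV_g$. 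Without this step the coefficient $\frac{1+\e}{2}$ cannot be reached.
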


\begin{proof}
We list the main steps for the reader's convenience. One may assume $\ov u=0$ and $u$ is decomposed so that $u=v+w$, with $\ov v = \ov w = 0$ and $v\in L^{\infty}(M)$. Such decomposition will be suitably chosen in the final step. By a covering argument, see for example \cite{bjmr, je-ya, mal-ndi}, there exist $\dis{\ov\d>0,\;\ov\th>0}$ and $\dis{\{\O_n\}_{n=1}^k\subset M}$ such that $d(\O_i,\O_j)\geq \ov\d$ for $i\neq j$ and
\begin{equation} 	\label{vol}
\int_{\O_n}e^{u}\,dV_g\ge\ov\th {\int_M e^{u}\,dV_g}, \quad \int_{\O_1}e^{au}\,dV_g \ge\ov\th\int_M e^{au}\,dV_g, \qquad\forall\;n\in\{1,\dots,k\}.
\end{equation}
Next we take $k$ cut-off functions $0\leq\chi_n\leq 1$ such that
\begin{equation} \label{cut}
	{\chi_n}_{|\O_n} \equiv 1, \quad {\chi_n}_{|M\setminus (B_{\d/2}(\O_n))}\equiv 0, \quad |\n \chi_n| \leq C_{\d}, \qquad n=1,\dots,k.
\end{equation}
The assumption \eqref{vol} on the volume spreading of $u$ implies
\begin{align}
	\log \int_M e^u \, dV_g &\leq \log \int_{\O_n} e^u \,dV_g + C_{\d} \leq \log \int_{M} e^{\chi_n w} \,dV_g + \|v\|_{L^{\infty}(M)}+ C. \label{dec}
\end{align}
Reasoning similarly for $au$ in $\O_1$ and using the Moser-Trudinger inequality \eqref{m-t} we obtain
\begin{align} \label{mt1}
\begin{split}
	8\pi \log \int_M e^u\,dV_g + \rho_{2,a} \log \int_M e^{au}\,dV_g  \leq &~ \frac 12 \int_M |\n(\chi_1 w)|^2 \,dV_g+ C\|v\|_{L^{\infty}(M)}\\
	  & + (8\pi+a\rho_{2,a})\int_M \chi_1 w \,dV_g + C.
\end{split}
\end{align}
By the Poincar\'e's and Young's inequalities it is possible to show
\begin{equation} \label{mt2}
 \int_M \chi_1 w \,dV_g  \leq \e \int_M |\n w|^2 \,dV_g + C_{\e}.
\end{equation}
while the gradient term can be estimated by the Young's inequality as follows:
\begin{align}
	\int_M |\n(\chi_1 w)|^2 \,dV_g \leq (1+\e)\int_{B_{\ov\d/2}(\O_1)}  |\n w|^2 \,dV_g + C_{\e,\ov\d}\int_M w^2 \,dV_g. \label{mt3}
\end{align}
For $m=2,\dots,k$ we use the spreading of $u$ and the standard Moser-Trudinger inequality \eqref{ineq} and the same argument as above to get
\begin{align} \label{mt-f2}
\begin{split}
8\pi \log \int_M e^u\,dV_g  \leq ~&\frac{1+\e}{2}\int_{B_{\ov\d/2}(\O_m)}  |\n w|^2 \,dV_g + \e \int_M |\n w|^2 \\
&+ C_{\e,\ov\d}\int_M w^2 \,dV_g + C\|v\|_{L^{\infty}(M)}+ C.
\end{split}
\end{align}
By combining \eqref{mt1}, \eqref{mt-f2} and recalling that the sets $\O_j$ are disjoint we end up with
\begin{align}
8k\pi \log \int_M e^u\,dV_g + \rho_{2,a} \log \int_M e^{au}\,dV_g  \leq ~&\frac{1+\e}{2}\int_M  |\n w|^2 \,dV_g + \e \int_M |\n w|^2 \,dV_g \nonumber\\
&+ C_{\e,\ov\d}\int_M w^2 \,dV_g + C\|v\|_{L^{\infty}(M)}+ C. \label{mt-f3}
\end{align}
Finally, we can suitably choose $v,w$ by means of a decomposition of $u$ relative to a basis of eigenfunctions of $-\D$ in $H^1(M)$ with zero average condition to estimate the left terms, see for example \cite{je-ya, mal-ndi}.
\end{proof}

\medskip

Proposition \ref{mt-impr} tells us that in the case when $J_\rho$ is sufficiently negative, the functions $e^u$ can not be spread over the surface (otherwise we would have a lower bound on $J_\rho$) and hence they are close to the set $M_k$ in \eqref{M_k}: more precisely, recalling $J_\rho^L$ in \eqref{sub} and $\dkr$ in \eqref{dist}, it holds the following (see for example \cite{zhou} for the first part of the statement and \cite{bjmr} for the second one).
\begin{pro}\label{p:map}
Suppose $\dis{\rh_1\in(8k\pi,8(k+1)\pi)}$, $k\in\N$ and
$\dis{\rh_2<\rho_{2,a}}$. Then, for any
$\displaystyle{\e>0}$ there exists $\dis{L>0}$ such that if
$\dis{u\in J_\rh^{-L}}$ then
$$\dkr\left(\frac{\
h_1e^{u}}{\int_M
h_1e^{u}\,dV_g},M_k\right)<\e.
$$
Moreover, for $L$ sufficiently large there exists a continuous retraction
$$
	\Psi: J_\rho^{-L} \to M_k.
$$
\end{pro}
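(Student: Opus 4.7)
The plan is to prove the first statement by contradiction using the improved Moser-Trudinger inequality of Proposition~\ref{mt-impr}, and then to build $\Psi$ by composing the continuous Liouville map $T: u \mapsto h_1 e^u/\int_M h_1 e^u \, dV_g$ with a standard retraction from a $\dkr$-neighborhood of $M_k$ in $\mathcal M(M)$ onto $M_k$. I would suppose by contradiction that for some $\e_0>0$ there is a sequence $u_n \in J_\rho^{-n}$ with $\dkr(T(u_n), M_k)\ge \e_0$. A standard covering-type lemma (as used in \cite{mal-ndi,bjmr}) translates this uniform lower bound into the existence of $\d,\th>0$ and, for every $n$, of $k+1$ subsets $S_1^n,\dots,S_{k+1}^n\subset M$ with $d(S_i^n,S_j^n)\ge \d$ for $i\ne j$ such that $\int_{S_i^n} e^{u_n}\,dV_g \ge \th \int_M e^{u_n}\,dV_g$. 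Applying Proposition~\ref{mt-impr} with $k$ replaced by $k+1$ and a small $\eta>0$ then produces
\[
8(k+1)\pi A_n + \rho_{2,a} B_n \le \tfrac{1+\eta}{2}G_n + C,
\]
with $A_n=\log\int_M e^{u_n-\ov{u}_n}\,dV_g$, $B_n=\log\int_M e^{a(u_n-\ov{u}_n)}\,dV_g$ (nonnegative by Jensen) and $G_n=\int_M|\n u_n|^2\,dV_g$.

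The core step is to combine this bound with the standard Moser-Trudinger inequality applied to $a u_n$, namely $8\pi B_n\le \tfrac{a^2}{2}G_n + C$, in order to bound $J_\rho(u_n)$ from below. Writing $t=\rho_1/(8(k+1)\pi)<1$: if $\rho_2 \le t\,\rho_{2,a}$ the non-negativity of $B_n$ is sufficient to absorb the $\rho_2 B_n$ term into $t \rho_{2,a} B_n$; otherwise I would decompose $\rho_2 B_n = t\rho_{2,a}B_n + (\rho_2-t\rho_{2,a})B_n$ and control the second, positive term via the standard inequality on $a u_n$. In either case the resulting coefficient of $G_n$ is at most
\[
\tfrac{t(1+\eta)}{2} + \tfrac{\max\{\rho_2-t\rho_{2,a},0\}\, a^2}{16\pi},
\]
and since $\rho_{2,a}a^2=8\pi-16\pi a$, the assumption $\rho_2<\rho_{2,a}$ gives $\rho_2 a^2<8\pi-16\pi a t$, so for $\eta$ small enough this coefficient is strictly less than $1/2$. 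Hence $J_\rho(u_n)\ge -C$, contradicting $J_\rho(u_n)\to-\infty$ and proving the first claim.

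For the retraction, the first part already ensures that for $L$ large enough $T(J_\rho^{-L})$ lies in any prescribed $\dkr$-neighborhood $U$ of $M_k$ in $\mathcal M(M)$. The construction of a continuous retraction $r: U \to M_k$ is by now standard (see e.g.\ \cite{mal-ndi, zhou}) and proceeds by projecting a measure supported close to at most $k$ points onto the nearest configuration in $M_k$ via a cut-off barycentric procedure that iteratively merges Dirac masses which are too close together. Setting $\Psi := r\circ T$ then yields the desired continuous retraction. The main obstacle I anticipate is the careful bookkeeping in the coefficient computation above, where the threshold $\rho_2<\rho_{2,a}=\tfrac{8\pi}{a^2}-\tfrac{16\pi}{a}$ enters sharply, reflecting the minimum fully blow-up mass from Theorem~\ref{th:blow-up}.
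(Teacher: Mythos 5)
Your argument is correct and is essentially the intended one: the paper itself gives no proof of this proposition, referring to \cite{zhou} and \cite{bjmr}, and the standard route there is exactly what you carry out — the covering lemma plus Proposition \ref{mt-impr} with $k+1$ sets, absorption of the $\rho_2$-term using $B_n\geq 0$ or the standard Moser--Trudinger inequality for $au_n$, and the usual retraction onto $M_k$ composed with $u\mapsto h_1e^u/\int_M h_1e^u\,dV_g$. Your coefficient bookkeeping, using $\rho_{2,a}a^2=8\pi-16\pi a$ and $t=\rho_1/(8(k+1)\pi)<1$ to get the gradient coefficient strictly below $1/2$ under $\rho_2<\rho_{2,a}$, checks out.
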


\subsection{Min-max scheme}

We construct now the min-max scheme which yields existence of solutions to \eqref{eq}. We observed in the previous subsection that the low sublevels $J_\rho^{-L} $ are mapped to the set $M_k$, see Proposition \ref{p:map}. The min-max scheme will be based on the latter set: to this end we need the following key result which asserts that $M_k$ and the associated map $\Psi$ are a good description of $J_\rho^{-L}$, in the sense that one can define a reverse map such that the composition of them is homotopic to the identity map, see for example \cite{bjmr,mal-ndi,zhou}. Recall the map $\Psi$ in Proposition \ref{p:map} and that $\rho_{2,a}=\dfrac{8\pi}{a^2}-\dfrac{16\pi}{a}$.
\begin{pro} \label{p:test}
Suppose $\dis{\rh_1\in(8k\pi,8(k+1)\pi)}$, $k\in\N$ and $\dis{\rh_2<\rho_{2,a}}$. Then, for $L$ sufficiently large there exists a continuous map
$$
	\Phi: M_k \to J_\rho^{-L},
$$
such that $\Psi\circ\Phi\cong Id_{M_k}$.
\end{pro}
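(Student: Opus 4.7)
The plan is to construct an explicit family of test functions of Djadli--Malchiodi bubble type and verify the standard two estimates. For $\lambda>1$ large and $\sigma=\sum_{i=1}^k t_i\delta_{x_i}\in M_k$, define
$$
\Phi(\sigma)(x)=\Phi_\lambda(\sigma)(x)=\log\sum_{i=1}^k t_i\left(\frac{\lambda}{1+\lambda^2 d(x,x_i)^2}\right)^{\!2}.
$$
The map $\sigma\mapsto\Phi_\lambda(\sigma)$ is continuous on $M_k$ (the convention $0\cdot(\cdot)=0$ absorbs the degenerate configurations where some $t_i$ vanish or support points collide). I would then verify: (A) $J_\rho(\Phi_\lambda(\sigma))\to-\infty$ uniformly in $\sigma$ as $\lambda\to+\infty$, so that $\Phi_\lambda$ takes values in $J_\rho^{-L}$ for $\lambda$ large; (B) $\frac{h_1 e^{\Phi_\lambda(\sigma)}}{\int_M h_1 e^{\Phi_\lambda(\sigma)}\,dV_g}\to\sigma$ in $\dkr$ uniformly in $\sigma\in M_k$ as $\lambda\to\infty$.

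For (A), I would carry out the standard bubble computation in three pieces. The Dirichlet energy satisfies $\frac12\int_M|\nabla\Phi_\lambda(\sigma)|^2\,dV_g\le 16\pi k\log\lambda+O(1)$, with the remainder uniform in $\sigma$. The exponential mass is conserved in the limit, giving $\log\int_M h_1 e^{\Phi_\lambda(\sigma)}\,dV_g=O(1)$, while on the bulk region $M\setminus\bigcup_i B_\varepsilon(x_i)$ one has $\Phi_\lambda\approx-2\log\lambda+O(1)$, so that $\int_M\Phi_\lambda(\sigma)\,dV_g=-2\log\lambda+O(1)$. The key observation, and the reason we are allowed to use the Liouville-type test function unchanged, is that since $a\in(0,\tfrac12)$ the function $e^{a\Phi_\lambda(\sigma)}$ has an integrable singularity at each $x_i$, so that its integral is dominated by the bulk contribution:
$$
\log\int_M h_2\, e^{a\Phi_\lambda(\sigma)}\,dV_g=-2a\log\lambda+O(1).
$$
Combining,
$$
J_\rho(\Phi_\lambda(\sigma))\le 16\pi k\log\lambda-2\rho_1\log\lambda+\bigl(-(-2a)+a(-2)\bigr)\rho_2\log\lambda+O(1)=(16\pi k-2\rho_1)\log\lambda+O(1),
$$
where the $\rho_2$-contribution cancels at leading order. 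Since $\rho_1>8k\pi$, property (A) follows with uniform $O(1)$'s.

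For (B), the standard computation shows that both $e^{\Phi_\lambda(\sigma)}/\!\int e^{\Phi_\lambda(\sigma)}\,dV_g$ and its $h_1$-weighted version weakly converge to $\sigma$ as $\lambda\to\infty$ (since each bubble $(\lambda/(1+\lambda^2 d(\cdot,x_i)^2))^2$ carries total mass $\to\pi$ and concentrates at $x_i$), and continuity on $M_k$ gives uniformity in $\sigma$. Fix now $\varepsilon>0$ small enough that the retraction $\Psi$ from Proposition \ref{p:map} is defined on an $\varepsilon$-tubular neighborhood of $M_k$ in $\mathcal M(M)$ and is Lipschitz there; choose $\lambda^*$ large enough that $\dkr\bigl(h_1 e^{\Phi_\lambda(\sigma)}/\!\int h_1 e^{\Phi_\lambda(\sigma)}\,dV_g,\,\sigma\bigr)<\varepsilon$ for every $\sigma\in M_k$ and $\lambda\ge\lambda^*$, and also large enough that $\Phi_\lambda(\sigma)\in J_\rho^{-L}$. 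Then $(\Psi\circ\Phi_{\lambda^*})(\sigma)$ lies in a small neighborhood of $\sigma$ on $M_k$, and a homotopy to $Id_{M_k}$ is obtained, after passing to the limit $\lambda\to\infty$ in $\mathcal M(M)$, by interpolating within this neighborhood via the natural convex structure of $\mathcal M(M)$ retracted by $\Psi$ (equivalently, one defines $H_s(\sigma)=\Psi\bigl((1-s)\mu_{\lambda^*,\sigma}+s\sigma\bigr)$ with $\mu_{\lambda,\sigma}$ the bubble measure, and observes that the straight segment stays in the $\varepsilon$-neighborhood where $\Psi$ is defined).

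The main obstacle is the uniformity in $\sigma\in M_k$ of the $O(1)$ remainders in Step (A), since $M_k$ includes degenerate configurations (coinciding support points, vanishing weights). This is handled by the explicit $\log(\sum_i t_i\cdots)$ form of the test function, which degenerates continuously when some $t_i=0$ and reduces the bubble count accordingly; the gain $\rho_1>8k\pi$ is preserved since, whenever the effective bubble number drops from $k$ to $k'<k$, the opposite inequality $16\pi k'-2\rho_1<16\pi k-2\rho_1$ only helps. All other steps are either direct bubble analysis or applications of Proposition \ref{p:map}.
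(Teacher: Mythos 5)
Your construction is essentially the paper's own proof: the same bubble test functions $\varphi_{\lambda,\sigma}$ (your normalization differs only by the additive constant $2\log\lambda$, which is irrelevant since $J_\rho$ is invariant under adding constants), the same three estimates (Dirichlet energy $\le(16k\pi+o_\lambda(1))\log\lambda+C$, the $\log\int h_1e^{\varphi_{\lambda,\sigma}}$ term and the average $\int_M\varphi_{\lambda,\sigma}\,dV_g$), uniformity in $\sigma$ handled by the explicit $\log\sum_i t_i(\cdot)$ form, and the homotopy $\Psi\circ\Phi\cong Id_{M_k}$ obtained from weak convergence of the normalized bubble measures to the barycenter together with the retraction property of $\Psi$. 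The only deviations are minor and harmless: you compute $\log\int_M h_2e^{a\varphi_{\lambda,\sigma}}\,dV_g=-2a\log\lambda+O(1)$ directly (legitimate since $a<\tfrac12$, which is forced by $\rho_{2,a}>0$) and let the two $\rho_2$-contributions cancel, whereas the paper simply bounds that whole term by a constant via Jensen's inequality, and your explicit measure-interpolation homotopy spells out the paper's terse appeal to $\Psi$ being a retraction — just note that for non-constant $h_1$ the weighted measure $h_1e^{\varphi_{\lambda,\sigma}}/\int_M h_1e^{\varphi_{\lambda,\sigma}}\,dV_g$ converges to the $h_1$-reweighted barycenter $\sum_i t_ih_1(x_i)\delta_{x_i}/\sum_j t_jh_1(x_j)$ rather than to $\sigma$ itself, which still lies in $M_k$ and is homotopic to $\sigma$ by interpolating the weights, so the conclusion is unaffected (the paper glosses over the same point).
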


\begin{proof}
We sketch the proof for the reader's convenience. For large $L$ and the parameter $\l>0$ to be defined in the sequel we consider $\s := \sum_{i=1}^k t_i \d_{x_i} \in M_k$,
\begin{equation} \label{bubble}
    \var_{\l , \s} (x) =   \log \, \sum_{i=1}^{k} t_i \left( \frac{1}{1 + \l^2 d(x,x_i)^2} \right)^2,
\end{equation}
and define $\Phi=\Phi_\l: M_k \to H^1(M)$ by $\Phi_\l(\s) = \var_{\l , \s}$. We need to show
\begin{equation} \label{claim}
  J_{\rho}(\var_{\l , \s}) \to - \infty \quad \hbox{ as  } \l \to + \infty,
  \qquad \quad \hbox{ uniformly in }  \s \in M_k.
\end{equation}
Concerning the gradient part we claim that
\begin{equation}\label{grad}
\frac{1}{2} \int_{M} |\n \var_{\l , \s}(x)|^2 \,dV_g \leq \bigr(16 k \pi+o_\l(1)\bigr) \log \l + C.
\end{equation}
It is indeed possible to show the following two estimates:
\begin{equation} \label{gr1}
    |\n \var_{\l , \s}(x)| \leq C \l, \qquad \mbox{for every $x\in M$},
\end{equation}
where $C$ is a constant independent of $\l$, $\s \in M_k$, and
\begin{equation} \label{gr2}
    |\n \var_{\l , \s}(x)| \leq \frac{4}{d_{min}(x)}, \qquad \mbox{for every $x\in M,$}
\end{equation}
where $\dis{d_{min}(x) = \min_{i=1,\dots,k} d(x,x_i)}$.

We consider then
\begin{align*}
    \frac{1}{2} \int_{M} |\n \var_{\l , \s}(x)|^2 \,dV_g =~& \frac{1}{2} \int_{\bigcup_i B_{\frac{1}{\l}}(x_i)} |\n \var_{\l , \s}(x)|^2 \,dV_g \\
    &+ \frac{1}{2}\int_{M \setminus \bigcup_i B_{\frac{1}{\l}}(x_i)} |\n \var_{\l , \s}(x)|^2 \,dV_g.
\end{align*}
From (\ref{gr1}) we get
$$
    \int_{\bigcup_i B_{\frac{1}{\l}}(x_i)} |\n \var_{\l , \s}(x)|^2 \,dV_g \leq C.
$$
We then introduce the sets
$$
    A_i = \left\{ x \in M : d(x,x_i) = \min_{j=1,\dots,k} d(x,x_j) \right\},
$$
and by (\ref{gr2}) we obtain
\begin{align*}
    \frac 12 \int_{M \setminus \bigcup_i B_{\frac{1}{\l}}(x_i)} |\n \var_{\l , \s}(x)|^2 \,dV_g
     \leq ~& 8 \sum_{i=1}^k \int_{A_i \setminus B_{\frac{1}{\l}}(x_i)} \frac{1}{d_{min}^2(x)} \,dV_g + C \\
                                         \leq~& \big(16 k \pi+o_\l(1)\big) \log \l + C.
\end{align*}

For the part involving the nonlinear term we claim
\begin{equation} \label{exp1}
\log \int_M  e^{\var_{\l , \s}} \,dV_g =  - 2 \log \l + O(1).
\end{equation}
It is enough to estimate
$$
\int_M \frac{1}{\bigr( 1 + \l^2 d(x,\ov{x})^2 \bigr)^2} \,dV_g,
$$
for some fixed $\ov x\in M$. By a change of variables it is easy to show that
$$
\int_M \frac{1}{\bigr( 1 + \l^2 d(x,\ov{x})^2 \bigr)^2} \,dV_g= \l^{-2}(1 + O(1)),
$$
which gives the claim in \eqref{exp1}.

Concerning the average part we claim that
\begin{equation} \label{aver}
	\int_M {\var_{\l , \s}} \, dV_g = -\bigr(4+o_\l(1)\bigr) \log \l + O(1).
\end{equation}
For simplicity we show the latter estimate just for $k=1$. It holds
$$
	{\var_{\l , \s}}(x) = -4 \log \bigr( \max\{ 1,\l d(x, x_1) \} \bigr) + O(1), \qquad x_1 \in M.
$$
We have
\begin{align*}
	\int_M {\var_{\l , \s}}\, dV_g &= -4 \int_{M \setminus B_{\frac{1}{\l}}(x_1)} \log \bigr(\l d(x, x_1)\bigr) \,dV_g - 4 \int_{B_{\frac{1}{\l}}(x_1)}\, dV_g + O(1) \\
	&= -4 \log\l \left|M \setminus B_{\frac{1}{\l}}(x_1)\right| -4 \int_{M \setminus B_{\frac{1}{\l}}(x_1)} \log (d(x, x_1)) \,dV_g + O(1).
\end{align*}
Recalling that we have $|M|=1$ the claim \eqref{aver} is proved.

Finally, using first the Jensen's inequality involving the part $e^{a(\var_{\l , \s}-\ov \var_{\l , \s})}$ and then \eqref{grad}, \eqref{exp1}, \eqref{aver} we deduce
\begin{align*}
	J_\rho(\var_{\l , \s}) =~&\frac{1}{2}\int_M |\nabla \var_{\l , \s}|^2 \,dV_g - \rho_1  \left( \log\int_M h_1  e^{\var_{\l , \s}} \,dV_g  - \int_M \var_{\l , \s} \,dV_g \right)  \\
	&  -  {\rho_2}  \left( \log\int_M h_2  e^{a \var_{\l , \s}} \,dV_g  - \int_M  a \var_{\l , \s} \,dV_g \right) \\
			\leq~&\frac{1}{2}\int_M |\nabla \var_{\l , \s}|^2 \,dV_g - \rho_1  \left( \log\int_M h_1  e^{\var_{\l , \s}} \,dV_g  - \int_M \var_{\l , \s} \,dV_g \right)\\
			\leq~&\bigr(16k\pi-2\rho_1+o_\l(1)\bigr) \log\l + O(1),
\end{align*}
which proves \eqref{claim} since by assumption $\rho_1>8k\pi$.

We prove now the final assertion of the proposition. It is standard to see that
$$
\frac{e^{\var_{\l , \s}}}{\int_M e^{\var_{\l , \s}}\,dV_g} \rightharpoonup \s, \qquad \mbox{as } \l\to+\infty,
$$
in the sense of measures, see for example \cite{bjmr,mal-ndi,zhou}. We point out that the map $\Psi$ in Proposition \ref{p:map} is a retraction: it follows then that
$$
\Psi\left(\frac{e^{\var_{\l , \s}}}{\int_M e^{\var_{\l , \s}}\,dV_g}\right) \to \s,
$$
which gives the desired homotopy $\Psi\circ\Phi\cong Id_{M_k}$.
\end{proof}

\medskip

We are now in the position to construct the min-max scheme and to prove the Theorem \ref{th:exist}.

\medskip

\begin{proof}[Proof of Theorem \ref{th:exist}.]
Let $\overline{M}_k$ be the topological cone over $M_k$, i.e.
\begin{equation} \label{cone}
   \overline{M}_k = \bigr(M_k \times [0,1]\bigr) \Bigr/ \bigr(M_k \times \{ 1 \}\bigr),
\end{equation}
where the equivalence relation identifies all the points in $M_k \times \{ 1 \}$. We choose $L > 0$ so large that the map $\Psi: J_\rho^{-L} \to M_k$ in Proposition \ref{p:map} is well-defined and let $\l>1$ be so large such that $J_\rho(\varphi_{\l, \s}) \leq - 4L$ uniformly in $\s\in M_k$, where $\varphi_{\l, \s}$ is given in \eqref{bubble}. We then consider the following family of functions
\begin{equation}\label{class}
    \mathcal{H} = \Bigr\{ h : \overline{M}_k \rightarrow H^1(\Sigma) \; : \; h \hbox{ is continuous and } h\bigr(\s,0\bigr)
 = \varphi_{\l,\s} \,,\, \s\in M_k \Bigr\},
\end{equation}
which clearly is non-empty: indeed, $\bar{h}(\s,s) = (1-s) \,\varphi_{\l, \s}$, $s\in[0,1],\,\s\in M_k$ belongs to $\mathcal{H}$. Letting
$$
  c_{\rho} = \inf_{h \in \mathcal{H}}\; \sup_{m \in \overline{M}_k} J_{\rho}\bigr(h(m)\bigr),
$$
the key property is that $$c_{\rho} > - 2 L.$$ Indeed, assuming by contradiction that $c_{\rho}\leq -2L$ and there would  exist a map $h \in \mathcal{H}$ with $\sup_{m \in \overline{M}_k} J_{\rho}\bigr(h(m)\bigr) \leq - L$. Then, let $m = (\s, t)$, with $\vartheta \in M_k$, from Proposition \ref{p:map} we can conclude the map $ t \mapsto \Psi \circ h(\cdot,t)$ would be a homotopy in $M_k$ between $ \Psi \circ \varphi_{\l,\s}$ (for $t=0$) and a constant map (for $t=1$ due to the equivalence relation in \eqref{cone}). But this is impossible since $M_k$ is non-contractible and since $\Psi \circ \varphi_{\l,\s}=\Psi\circ\Phi\cong Id_{M_k}$ by Proposition \ref{p:test}. Therefore we deduce the desired estimate.

On the other hand, by construction and by the choice of $\l>1$ we have
$$
	\sup_{h\in\mathcal H}\sup_{m\in\partial \ov{M}_k} J_\rho\bigr(h(m)\bigr) = \sup_{\s\in M_k} J_\rho(\varphi_{\l, \s}) \leq -4L.
$$

We conclude that the functional $J_{\rho}$ has a min-max structure which in turn yields a Palais-Smale sequence. However, we cannot directly conclude the existence of a critical point, since it is not known whether the Palais-Smale condition holds or not. It is then standard to follow the \emph{monotonicity argument} introduced in \cite{struwe} to get in a first step bounded Palais-Smale sequences and then a sequence of solutions to \eqref{eq} relative to $\rho_{1,k}\to \rho_1\in (8k\pi,8(k+1)\pi)$, $\rho_{2,k}\to\rho_2<\rho_{2,a}$. It is then sufficient to apply the compactness result in Theorem \ref{th:comp} to deduce convergence to a solution of \eqref{eq}.
\end{proof}

\


\begin{thebibliography}{99}

\bibitem{ba}
A. Bahri and J.M. Coron, The scalar curvature problem on the standard three dimensional
sphere. \emph{J. Funct. Anal.} 95 (1991), no. 1, 106-172.

\bibitem{bjmr}
L. Battaglia, A. Jevnikar, A. Malchiodi and D. Ruiz, A general existence result for the Toda
system on compact surfaces. \emph{Adv. Math.} 285 (2015), 937-979.

\bibitem{bm}
H. Brezis and F. Merle, Uniform estimates and blow-up behavior for solutions of $-\D u = V(x)\,e^u$ in two dimensions. \emph{Comm. Partial Differential Equation} 16 (1991), 1223-1254.

\bibitem{clmp}
E. Caglioti, P.L. Lions, C. Marchioro and M. Pulvirenti, A special class of stationary flows
for two-dimensional Euler equations: a statistical mechanics description. \emph{Comm. Math. Phys.}
143 (1992), no. 3, 501-525.

\bibitem{cha}
S.Y.A. Chang, M.J. Gursky and P.C. Yang, The scalar curvature equation on $2$- and $3$-
spheres. \emph{Calc. Var. and Partial Diff. Eq.} 1 (1993), no. 2, 205-229.

\bibitem{cha2}
S.Y.A. Chang and P.C. Yang, Prescribing Gaussian curvature on $S^2$. \emph{Acta Math.} 159 (1987),
no. 3-4, 215-259.

\bibitem{ch-li}
W. Chen and C. Li, Prescribing scalar curvature on $S^n$. \emph{Pacific J. Math.} 199 (2001), no. 1, 61-78.



\bibitem{jev}
A. Jevnikar, An existence result for the mean field equation on compact surfaces in a doubly
supercritical regime. \emph{Proc. Royal Soc. Edinb. Sect. A} 143 (2013), no. 5, 1021-1045.

\bibitem{jev2}
A. Jevnikar, New existence results for the mean field equation on compact surfaces via degree
theory. To appear in \emph{Rend. Semin. Mat. Univ. Padova}.

\bibitem{jev3}
A. Jevnikar, A note on a multiplicity result for the mean field equation on compact surfaces. \emph{Adv. Nonlinear Stud.} 16 (2016), no. 2, 221-229.

\bibitem{je-ya}
A. Jevnikar and W. Yang, Analytic aspects of the Tzitz\'eica equation: blow-up analysis and existence results. Preprint. Arxiv https://arxiv.org/abs/1605.01875.

\bibitem{jwy}
A. Jevnikar, J. Wei and W. Yang, Classification of blow-up limits for the sinh-Gordon equation. Preprint. Arxiv
http://arxiv.org/pdf/1602.02437v1.

\bibitem{jwy2}
A. Jevnikar, J. Wei and W. Yang, On the Topological degree of the Mean field equation with two parameters. Preprint. Arxiv
http://arxiv.org/pdf/1602.03354v1.

\bibitem{ki}
M.K.H. Kiessling, Statistical mechanics of classical particles with logarithmic interactions.
\emph{Comm. Pure Appl. Math.} 46 (1993), no. 1, 27-56.

\bibitem{ly}
Y.Y. Li, Prescribing scalar curvature on $S^n$ and related problems. \emph{I. J. Differential Equations}
120 (1995), no. 2, 319-410.

\bibitem{li}
Y.Y. Li, Harnack type inequality: the method of moving planes. \emph{Comm. Math. Phys.} 200
(1999), no. 2, 421-444.

\bibitem{li-sha}
Y.Y. Li and I. Shafrir, Blow-up analysis for solutions of $-\D u = V\, e^u$ in dimension two.
\emph{Indiana Univ. Math. J.} 43 (1994), no. 4, 1255-1270.

\bibitem{lin-wei-zh}
C.S. Lin, J.C. Wei and L. Zhang, Classifcation of blowup limits for $SU(3)$ singular Toda
systems. \emph{Anal. PDE 8} (2015), no. 4, 807-837.

\bibitem{lin-zh}
C.S. Lin and L. Zhang, Energy concentration for Singular Toda systems with $B_2$ and $G_2$ types of
Cartan matrices. \emph{Int. Math. Res. Notices} (2015).

\bibitem{mal}
A. Malchiodi, Topological methods for an elliptic equation with exponential nonlinearities.
\emph{Discrete Contin. Dyn. Syst.} 21 (2008), no. 1, 277-294.

\bibitem{mal-ndi}
A. Malchiodi and C.B. Ndiaye, Some existence results for the Toda system on closed surfaces.
\emph{Atti Accad. Naz. Lincei Cl. Sci. Fis. Mat. Natur. Rend. Lincei (9) Mat. Appl.} 18 (2007), no. 4,
391-412.

\bibitem{ors}
H. Ohtsuka, T. Ricciardi and T. Suzuki, Blow-up analysis for an elliptic equation describing stationary vortex flows with variable intensities in $2D$-turbulence. \emph{J. Differential Equations} 249 (2010), 1436-1465.

\bibitem{os}
H. Ohtsuka and T. Suzuki, Mean field equation for the equilibrium turbulence and a related
functional inequality. \emph{Adv. Differ. Equ.} 11 (2006), 281-304.

\bibitem{os1}
H. Ohtsuka and T. Suzuki, A blowup analysis of the mean field equation for arbitrarily signed
vortices. \emph{Self-similar solutions of nonlinear PDE} 74 (2006), 185-197.

\bibitem{ons}
L. Onsager, Statistical hydrodynamics. \emph{Nuovo Cimento Suppl.} 6 (1949), 279-287.

\bibitem{pt}  A. Poliakovsky and G. Tarantello, On a planar Liouville-type problem in the study of selfgravitating strings. {\em J. Differential Equations} 252 (2012), no. 5, 3668-3693.

\bibitem{pi-ri}
A. Pistoia and T. Ricciardi, Concentrating solutions for a Liouville type equation with variable
intensities in $2$D-turbulence. Preprint. ArXiv http://arxiv.org/pdf/1507.01449v1.

\bibitem{rtzz}
T. Ricciardi, R. Takahashi, G. Zecca and X. Zhang, On the existence and blow-up of solutions for a mean field equation with variable intensities. Preprint. Arxiv http://arxiv.org/abs/1509.05204v1.

\bibitem{ri-ze}
T. Ricciardi and G. Zecca, Blow-up analysis for some mean field equations involving probability measures from statistical hydrodynamics. \emph{Differential
and Integral Equations} 25 (2012), n. 3-4, 201-222.

\bibitem{sa-su}
K. Sawada, and T. Suzuki, Derivation of the equilibrium mean field equations of point vortex and vortex filament system, \emph{Theoret. Appl. Mech. Japan} 56 (2008), 285-290.

\bibitem{s}
R. Schoen and D. Zhang, Prescribed scalar curvature on the $n$-sphere. \emph{Calc. Var.} 4 (1996),
no. 1, 1-25.

\bibitem{struwe}
M. Struwe, The existence of surfaces of constant mean curvature with free boundaries, \emph{Acta Math.} 160 (1988), 19-64.

\bibitem{tar}
G. Tarantello, Analytical, geometrical and topological aspects of a class of mean field equations on surfaces. \emph{Discrete Contin. Dyn. Syst.} 28 (2010), no. 3, 931-973.

\bibitem{tar2}
G. Tarantello, Blow up analysis for a cosmic strings equation. Preprint. Arxiv http://arxiv.org/abs/1506.02018.

\bibitem{zhou}
C. Zhou, Existence result for mean field equation of the equilibrium turbulence in the super
critical case. \emph{Commun. Contemp. Math.} 13 (2011), no. 4, 659-673.

\end{thebibliography}
\end{document}